\theoremstyle{plain}
\newtheorem{thm}{Theorem}
\newtheorem{lemma}[thm]{Lemma}
\newtheorem{cor}[thm]{Corollary} 
\newtheorem{prop}[thm]{Proposition}
\newtheorem{conj}[thm]{Conjecture}
\theoremstyle{definition}
\theoremstyle{remark}
\numberwithin{thm}{section}
\numberwithin{equation}{section}
\Crefname{thm}{Theorem}{Theorems}
\newcommand{\abs}[1]{\left\lvert#1\right\rvert}
\newcommand{\ceil}[1]{\left\lceil #1 \right\rceil}
\newcommand{\E}{\mathbb E}
\newcommand{\cA}{\mathcal{A}}
\newcommand{\cB}{\mathcal{B}}
\DeclareMathOperator{\ex}{ex}
\title{Triangle Ramsey numbers of complete graphs}
\author[Fox]{Jacob Fox}
\author[Tidor]{Jonathan Tidor}
\author[Zhang]{Shengtong Zhang}
\address{Department of Mathematics, Stanford University, Stanford, CA 94305, USA}
\email{\textnormal{\{}jacobfox, stzh1555\textnormal{\}}@stanford.edu}
\address{Department of Mathematics, Princeton University, Princeton, NJ 08544, USA}
\email{jtidor@princeton.edu}
\thanks{Fox was supported by NSF Awards DMS-2452737 and DMS-2154129. Tidor was supported by a Stanford Science Fellowship.}
\date{}
\begin{document}

\begin{abstract}
A graph is $H$-Ramsey if every two-coloring of its edges contains a monochromatic copy of $H$. Define the $F$-Ramsey number of $H$, denoted by $r_F(H)$, to be the minimum number of copies of $F$ in a graph which is $H$-Ramsey. This generalizes the Ramsey number and size Ramsey number of a graph. Addressing a question of Spiro, we prove that
\[r_{K_3}(K_t)=\binom{r(K_t)}3\]
for all sufficiently large $t$.  We do so through a result on graph coloring: there exists an absolute constant $K$ such that every $r$-chromatic graph where every edge is contained in at least $K$ triangles must contain at least $\binom r3$ triangles in total. 
\end{abstract}

\maketitle

\section{Introduction}

A graph $G$ is {\it $H$-Ramsey} if every two-coloring of the edges of $G$ contains a monochromatic copy of $H$. We define $r_F(H)$ to be the minimum number of copies of $F$ in a graph which is $H$-Ramsey. The usual notion of the Ramsey number corresponds to the case when $F$ is a single vertex, while the size Ramsey number corresponds to the case when $F = K_2$.

In general, the relationship between the Ramsey number and the size Ramsey number is not known. However, when $H$ is a complete graph the following observation of Chv\'atal (as reported in the paper of Erd\H{o}s, Faudree, Rousseau, and Schelp \cite{EFRS78} that defines the size Ramsey number) gives the precise relationship, showing that the complete graph on $r(K_t)$ vertices has the minimum number of edges among all graphs which are $K_t$-Ramsey. 

\begin{thm}[{Chv\'atal \cite[Theorem 1]{EFRS78}}]
\label{thm:chvatal-size-ramsey-clique}
For all $t$,
\[r_{K_2}(K_t)=\binom{r(K_t)}{2}.\]
\end{thm}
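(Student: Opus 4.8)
The plan is to establish matching upper and lower bounds on $r_{K_2}(K_t)$, the minimum number of edges in a $K_t$-Ramsey graph. The upper bound is immediate: $K_{r(K_t)}$ is $K_t$-Ramsey by the definition of the Ramsey number, and it has exactly $\binom{r(K_t)}{2}$ edges, so $r_{K_2}(K_t) \le \binom{r(K_t)}{2}$. All the content is in the matching lower bound: every $K_t$-Ramsey graph $G$ has at least $\binom{r(K_t)}{2}$ edges.

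I would deduce this from two claims. \emph{Claim 1: every $K_t$-Ramsey graph $G$ has $\chi(G) \ge r(K_t)$.} Suppose not, and put $m := \chi(G) < r(K_t)$. Fix a proper vertex coloring $\phi \colon V(G) \to [m]$. Since $m < r(K_t)$, the complete graph on vertex set $[m]$ admits a two-coloring $c$ of its edges with no monochromatic $K_t$. Two-color $E(G)$ by assigning to each edge $uv$ the color $c(\{\phi(u),\phi(v)\})$, which is well-defined because $\phi(u) \ne \phi(v)$. Any monochromatic $K_t$ in this coloring of $G$ would have its $t$ vertices in distinct classes of $\phi$ (a clique is properly colored), hence $\phi$ would map it to a monochromatic $K_t$ under $c$ — a contradiction. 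So no such $K_t$ exists, contradicting that $G$ is $K_t$-Ramsey.

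\emph{Claim 2: if $\chi(H) \ge n$ then $H$ has at least $\binom{n}{2}$ edges.} Take a subgraph $H' \subseteq H$ minimal with respect to vertex deletion subject to $\chi(H') \ge n$, so that $\chi(H' - v) \le n - 1$ for every $v \in V(H')$. Then every $v \in V(H')$ has degree at least $n-1$ in $H'$: otherwise an $(n-1)$-coloring of $H' - v$ would extend to $v$ using a color missed by its at most $n-2$ neighbors, forcing $\chi(H') \le n-1$. Since $|V(H')| \ge \chi(H') \ge n$, we get $e(H) \ge e(H') \ge \tfrac12 n(n-1) = \binom{n}{2}$. Applying Claim 2 with $n = r(K_t)$ to the graph from Claim 1 yields $e(G) \ge \binom{r(K_t)}{2}$, finishing the lower bound and hence the theorem.

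The only step requiring an idea is Claim 1: recognizing that chromatic number — not edge count or a density-type parameter — is precisely the obstruction that makes a graph $K_t$-Ramsey, and the device of lifting a proper vertex coloring to an edge two-coloring via an extremal coloring of $K_m$. Claim 2 and the upper bound are classical, with the only subtlety being the correct appeal to criticality.
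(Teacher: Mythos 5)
Your proposal is correct and follows essentially the same route the paper indicates: Claim~1 is exactly the paper's Proposition~1.4(a) with the same lifting-of-an-extremal-coloring argument, and Claim~2 is the ``simple observation that every $r$-chromatic graph has at least $\binom r2$ edges'' proved via a vertex-critical subgraph of minimum degree at least $r-1$.
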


In this paper we study a question of Sam Spiro who asked if an analogue of Chv\'atal's result holds for $r_{K_s}(K_t)$. We conjecture that the answer is yes.

\begin{conj}
\label{conj:clique-clique}
For all $s\leq t$,
\[r_{K_s}(K_t)=\binom{r(K_t)}s.\]
\end{conj}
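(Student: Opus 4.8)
The plan is to prove the two inequalities separately, with all the content in the lower bound. The upper bound is immediate: $K_{r(K_t)}$ is $K_t$-Ramsey and contains exactly $\binom{r(K_t)}{s}$ copies of $K_s$, so $r_{K_s}(K_t)\le\binom{r(K_t)}{s}$. For the matching lower bound it suffices to take $G$ to be a $K_t$-Ramsey graph with the fewest copies of $K_s$ (and, among those, the fewest edges) and to prove that $G$ has at least $\binom{r(K_t)}{s}$ copies of $K_s$.

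Next I would extract two structural features of such a $G$. First, $\chi(G)\ge r(K_t)$: given a proper vertex colouring of $G$ with fewer than $r(K_t)$ colours together with a $2$-colouring of $K_{r(K_t)-1}$ with no monochromatic $K_t$, colour each edge $uv$ of $G$ by the colour that $K_{r(K_t)-1}$ assigns to the pair of colour classes of $u$ and $v$ (well-defined as the vertex colouring is proper); a monochromatic $K_t$ in $G$ spans $t$ distinct colour classes and hence projects to a monochromatic $K_t$ in $K_{r(K_t)-1}$, a contradiction. (For $s=2$ this already gives Chv\'atal's theorem, since an $r$-chromatic graph has at least $\binom r2$ edges.) Second, every edge of $G$ lies in at least $\binom{t-2}{s-2}$ copies of $K_s$: for an edge $e=uv$ the minimality of $G$ forces $G-e$ not to be $K_t$-Ramsey, so fix a $2$-colouring of $G-e$ with no monochromatic $K_t$; colouring $e$ red then creates a red $K_t$, which must use $e$, so $\{u,v\}$ together with $t-2$ common neighbours spans a $K_t$ in $G$, and choosing $s-2$ of those neighbours gives $\binom{t-2}{s-2}$ copies of $K_s$ through $e$. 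When $t$ is large in terms of $s$ this exceeds any prescribed constant.

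Thus everything reduces to the following colouring statement, of which the $s=3$ case is the result announced in the abstract: \emph{there is a constant $K=K(s)$ such that every $r$-chromatic graph in which every edge lies in at least $K$ copies of $K_s$ contains at least $\binom rs$ copies of $K_s$} (applied to $G$ with $r=\chi(G)\ge r(K_t)$, using $\binom{\chi(G)}{s}\ge\binom{r(K_t)}{s}$). Proving this for $s\ge4$ is the main obstacle. The natural attempt is induction on $s$, counting each $K_s$ through the copies of $K_{s-1}$ in the neighbourhood of a well-chosen vertex; but to run this one needs a vertex whose neighbourhood is simultaneously highly chromatic and still rich in copies of $K_{s-1}$, and the hypothesis does not propagate cleanly from edges to larger cliques — already ``every edge in many copies of $K_s$'' fails to force ``every triangle in a copy of $K_s$'', as one sees by gluing many copies of $K_s$ along a common edge. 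Getting around this appears to require, as in the $s=3$ proof, a delicate analysis of how local clique-density interacts with colour-critical subgraphs. A second, separate difficulty occurs when $s$ is close to $t$: there the reduction only guarantees $\binom{t-2}{s-2}$ copies of $K_s$ through each edge, which degenerates to $1$ at $s=t$, so in that range one must use the full Ramsey property of $G$ rather than just $\chi(G)\ge r(K_t)$, presumably via a different argument.
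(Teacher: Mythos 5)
This is \cref{conj:clique-clique}, which the paper leaves open; only the case $s=3$ with $t$ large is proved (\cref{thm:main-triangle-ramsey}). Your reduction is correct and is the one the paper uses for $s=3$: pass to a $K_t$-Ramsey critical graph, invoke \cref{prop:ramsey-critical} to see it is $r(K_t)$-chromatic and that every edge lies in a $K_t$ (hence in at least $\binom{t-2}{s-2}$ copies of $K_s$), and reduce the conjecture to a pure colouring statement.

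The gap is that the colouring statement you reduce to --- that there is a constant $K(s)$ so that every $r$-chromatic graph in which each edge lies in at least $K(s)$ copies of $K_s$ must contain at least $\binom{r}{s}$ copies of $K_s$ --- is not merely hard for $s\ge 4$ but \emph{false}. The paper states this explicitly in \cref{sec:conclusion} and gives the counterexample: take a triangle-free $r$-chromatic graph with $O(r^3\log^2 r)$ edges and glue a private $K_{s+K}$ onto each edge. Every edge of the resulting graph lies in $\binom{s+K-2}{s-2}$ copies of $K_s$, the graph is still $r$-chromatic, yet the total number of copies of $K_s$ is $O_{s,K}(r^3\log^2 r)$, far below $\binom{r}{s}=\Theta(r^s)$ once $s\ge 4$; indeed there is no analogue even of \cref{thm:basic-turan-bound} for $s\ge 4$. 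So the proposed reduction dead-ends: any proof for $s\ge 4$ must use more than ``$\chi\ge r(K_t)$ plus local $K_s$-richness,'' presumably the full Ramsey criticality, as you yourself anticipate in your remark about $s$ near $t$. Your ``gluing'' observation points at the obstruction, but you frame it as the inductive step failing; the paper's construction shows the target statement itself is false, so the whole route, not just the naive induction, has to be abandoned.
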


We verify this conjecture in the case $s = 3$ and $t$ sufficiently large. We call $r_{K_3}(K_t)$ the \emph{triangle Ramsey number}. 

\begin{thm}
\label{thm:main-triangle-ramsey}
For all sufficiently large $t$,
\[r_{K_3}(K_t)=\binom{r(K_t)}3.\]
\end{thm}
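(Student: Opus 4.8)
My plan is to prove matching upper and lower bounds. The upper bound $r_{K_3}(K_t)\le\binom{r(K_t)}{3}$ is free: the clique $K_{r(K_t)}$ is $K_t$-Ramsey by the definition of the Ramsey number and contains exactly $\binom{r(K_t)}{3}$ triangles. Everything else is the lower bound, which asks that every $K_t$-Ramsey graph $G$ contain at least $\binom{r(K_t)}{3}$ triangles. The engine for this, as flagged in the abstract, is the graph-colouring statement: there is an absolute constant $K$ such that any graph $H$ with $\chi(H)\ge r$ in which every edge lies in at least $K$ triangles has at least $\binom r3$ triangles. I will take this as the key lemma and reduce the theorem to it.

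Granting the lemma, the lower bound proceeds in two moves. The first is a Chv\'atal-style observation that any $K_t$-Ramsey graph $G$ satisfies $\chi(G)\ge r(K_t)$: if $\phi$ were a proper colouring of $G$ with at most $r(K_t)-1$ colours, fix a two-colouring of the edges of $K_{r(K_t)-1}$ with no monochromatic $K_t$ (which exists by definition of $r(K_t)$), and colour each edge $uv$ of $G$ by the colour assigned to the pair $\{\phi(u),\phi(v)\}$; since every clique of $G$ is rainbow under $\phi$, a monochromatic $K_t$ in $G$ would project to one in $K_{r(K_t)-1}$, a contradiction. The second move is a cleaning step. Call an edge \emph{light} if it lies in fewer than $K$ triangles; then its endpoints have at most $K-1$ common neighbours, so for $t\ge K+2$ a light edge lies in no copy of $K_t$ at all. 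Hence deleting all light edges from a $K_t$-Ramsey graph leaves a graph that is still $K_t$-Ramsey, because any two-colouring of the trimmed graph with no monochromatic $K_t$ extends, by colouring the deleted edges arbitrarily, to such a colouring of the original. Iterating this deletion terminates and produces a subgraph $G_\infty\subseteq G$ that is $K_t$-Ramsey and has every edge in at least $K$ triangles; by the first move $\chi(G_\infty)\ge r(K_t)$, so the lemma forces $G_\infty$ — hence $G$ — to have at least $\binom{r(K_t)}{3}$ triangles.

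Thus the real content, and the step I expect to be the main obstacle, is the graph-colouring lemma itself. The hypothesis that every edge lies in many triangles is essential (odd cycles are $3$-chromatic and triangle-free), and the obvious first attempt — passing to an $r$-critical subgraph, which gains minimum degree $r-1$ but may lose the many-triangles property — does not by itself deliver the triangle count. I would try an induction on $r$: use the many-triangles hypothesis to find, inside a suitable colour-critical part of the graph, a vertex or short clique whose neighbourhood is still highly chromatic and, after re-cleaning, still has every edge in many triangles; peel it off and balance the triangles destroyed against the drop in chromatic number, choosing $K$ large enough to absorb the accumulated losses. Making this peeling quantitative — guaranteeing that each peeled piece carries enough triangles while the chromatic number decreases by only the right amount — is where I expect the difficulty to concentrate.
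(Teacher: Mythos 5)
Your reduction is exactly the paper's: the paper records your two moves as its Proposition~1.5 (a $K_t$-Ramsey graph is $r(K_t)$-chromatic; a $K_t$-Ramsey critical graph has every edge in a $K_t$, hence in at least $t-2\geq K$ triangles) and then derives the theorem from precisely the graph-colouring statement you isolate, which is its Theorem~1.6. Your iterated deletion of light edges is an equivalent route to a Ramsey-critical subgraph, and the Tur\'an-style peeling you sketch for the colouring lemma is indeed the paper's opening gambit in Section~2, though the full proof of that lemma occupies Sections~3--5 and needs substantially more machinery (Gallai's theorem on small critical graphs, locally-sparse colouring results of Vu and of Harris, a probabilistic degeneracy-colouring lemma, and a Ramsey--Tur\'an bound) than the bare induction you outline.
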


To the best of our knowledge, this is the first paper to study $r_F(H)$ for any $F$ other than a vertex or an edge. We think that studying $r_F(H)$ for various pairs $(F,H)$ is an interesting avenue of research. The way in which this generalizes classical Ramsey numbers is similar to the generalized Tur\'an function $\ex(n,F,H)$ recently introduced by Alon and Shikhelman \cite{AS16}. This function is defined to be the maximum number of copies of $F$ in an $H$-free graph and has already received much study (see, e.g., \cite{Zyk49,Erd62,Gre12,HHKNR13,GP19,GS20,Luo18,MQ20}).

Observe that the size Ramsey number of a $k$-uniform hypergraph is a special case (when $F$ is a single edge, the complete $k$-uniform hypergraph on $k$ vertices) of the straightforward generalization of $F$-Ramsey numbers to $k$-uniform hypergraphs. Note that the triangle Ramsey number is superficially similar to the size Ramsey number of 3-uniform hypergraphs. However, the natural extension of Conjecture \ref{conj:clique-clique} to hypergraphs does not hold.  
 Indeed, McKay \cite{McK17} showed that the size Ramsey number satisfies $\hat{r}(K_t^{(3)})<\binom{r(K_t^{(3)})}3$ for $t=4$. The contrast with \cref{thm:main-triangle-ramsey} and the growth rates of these functions suggest that the two problems are not deeply related. See \cite{DLMR17} for more discussion of the size Ramsey number of hypergraphs.

Our proof of \cref{thm:main-triangle-ramsey} follows from a result on the number of triangles in graphs with chromatic number $r$. First, we fix some terminology. 

We say a graph $G$ is \emph{$r$-chromatic} if $\chi(G)\geq r$. A graph $G$ is called \emph{$r$-critical} if $G$ is $r$-chromatic, but no proper induced subgraph of $G$ is $r$-chromatic. We say a graph $G$ is \emph{$H$-Ramsey critical} if $G$ is $H$-Ramsey but no proper subgraph of $G$ is $H$-Ramsey. The following elementary proposition is the basis of our argument. We will give a proof of it in \cref{sec:basic-turan}.

\begin{prop}
\label{prop:ramsey-critical}
\begin{enumerate}[(a)]
\item If $G$ is $K_t$-Ramsey,  then $G$ is $r(K_t)$-chromatic. 
\item If $G$ is $K_t$-Ramsey critical, then every edge of $G$ is contained in a $K_t$. 
\end{enumerate}
\end{prop}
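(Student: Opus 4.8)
The plan is to prove each part by contradiction, in both cases converting a failure of the conclusion into an explicit red/blue edge-coloring of $G$ that contains no monochromatic $K_t$.

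For part (a), I would suppose toward a contradiction that $\chi(G)\le r(K_t)-1$ and fix a proper vertex coloring $\phi\colon V(G)\to[r]$, where $r=r(K_t)-1$. Since $r<r(K_t)$, the definition of the Ramsey number gives a two-coloring $c$ of the edges of $K_r$ with no monochromatic $K_t$. The key step is to pull $c$ back along $\phi$: color each edge $uv\in E(G)$ with $c(\{\phi(u),\phi(v)\})$, which is well-defined because $\phi$ is proper, so $\phi(u)\ne\phi(v)$. Every clique of $G$ meets each color class of $\phi$ at most once, so the vertex set of any $K_t$ in $G$ maps under $\phi$ to a set of $t$ distinct vertices of $K_r$; if that $K_t$ were monochromatic in the pulled-back coloring, its image would span a monochromatic $K_t$ in $c$, contradicting the choice of $c$. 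Hence $G$ is not $K_t$-Ramsey, a contradiction.

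For part (b), let $G$ be $K_t$-Ramsey critical, fix an edge $e=uv$, and assume for contradiction that $e$ lies in no copy of $K_t$ in $G$. Since $G-e$ is a proper subgraph of $G$, it is not $K_t$-Ramsey, so there is a two-coloring $c'$ of $E(G-e)$ with no monochromatic $K_t$; extend $c'$ to a two-coloring $c$ of $E(G)$ by coloring $e$ arbitrarily. As $G$ is $K_t$-Ramsey, $c$ contains a monochromatic copy of $K_t$. This copy must use the edge $e$ — otherwise all of its edges lie in $G-e$ and it would already be a monochromatic $K_t$ in $c'$ — and therefore contains both endpoints $u$ and $v$. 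Thus $e$ is contained in a copy of $K_t$ in $G$, contradicting our assumption.

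I do not expect either part to present a genuine obstacle; both arguments are short and ``by definition'' in flavor. The one point that requires a moment of care is the observation in part (a) that a clique meets each color class of a proper vertex-coloring at most once, since this is exactly what makes the pullback of an edge-coloring of $K_r$ along $\phi$ behave correctly on cliques.
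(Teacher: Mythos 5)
Your proof is correct and takes essentially the same approach as the paper: part (a) is the same pullback of an edge-coloring of $K_{r(K_t)-1}$ along a proper vertex-coloring, and part (b) rests on the same observation that an edge lying in no $K_t$ cannot be needed for the Ramsey property. The only cosmetic difference is in (b): the paper deletes all such edges at once and notes the result is still $K_t$-Ramsey (contradicting criticality), whereas you isolate a single offending edge $e$, extend a good coloring of $G-e$, and observe the forced monochromatic $K_t$ must use $e$; both are one-line arguments from the definition.
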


The folklore proof of \cref{thm:chvatal-size-ramsey-clique} follows from \cref{prop:ramsey-critical}(a) and the simple observation that every $r$-chromatic graph has at least $\binom r2$ edges. If it were true that every $r$-chromatic graph has at least $\binom r3$ triangles, we could deduce our result on the triangle Ramsey number. Unfortunately this is not true; there are triangle-free $r$-chromatic graphs. However, any such graph has $\Omega(r^3\log^{2} r)$ edges \cite{Nil00}. By \cref{prop:ramsey-critical}(b), any graph with so many edges which is $K_t$-Ramsey critical will have significantly more than $\binom r3$ triangles. In light of \cref{prop:ramsey-critical}, it suffices to study $r$-chromatic graphs where every edge is contained in a $K_t$. We find it simpler to relax this condition to only requiring that each edge is contained in many triangles.

\begin{thm}
\label{thm:main-chromatic-weak}
There exists an absolute constant $K$ such that the following holds. Let $G$ be an $r$-chromatic graph where every edge is contained in at least $K$ triangles. Then $G$ contains at least $\binom r3$ triangles.
\end{thm}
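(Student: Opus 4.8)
The plan is to pass, by a standard minimality argument, to an $r$-critical induced subgraph of $G$ and then exploit the Kostochka--Yancey lower bound on the number of edges in a critical graph. Let $H\subseteq G$ be an induced subgraph minimal with respect to the property $\chi(H)\geq r$; then $\chi(H)=r$, $\delta(H)\geq r-1$, and --- crucially, because $H$ is a subgraph of $G$ --- every edge of $H$ still lies in at least $K$ triangles of $G$. Write $n=\abs{V(H)}$. Throughout we use the trivial inequality $3t(G)\geq K\cdot e(G)\geq K\cdot e(H)$, obtained by summing over edges the number of triangles through each edge. Since any graph of chromatic number at least $r$ has at least $\binom r2$ edges, for $r\leq 3K+2$ we get $3t(G)\geq K\binom r2\geq\binom r3$ outright; so we may assume $r>3K+2$, in particular Kostochka--Yancey applies to $H$.

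The argument then splits according to the size of $n$. In the \emph{dense regime} $n\geq r^2/K$, the Kostochka--Yancey theorem gives
\[e(H)\geq\frac{(r+1)(r-2)}{2(r-1)}\,n-\frac{r(r-3)}{2(r-1)}\geq\frac{(r-2)(n-2)}{2},\]
and hence $t(G)\geq\frac{K(r-2)(n-2)}{6}\geq\binom r3$ as soon as $K(n-2)\geq r(r-1)$, which holds throughout this regime (the precise threshold is immaterial). So we may assume $n<r^2/K$.

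The heart of the proof is the remaining regime $n<r^2/K$, where $H$ is a relatively small but locally dense critical graph and the edge count from Kostochka--Yancey is too weak on its own. If $G$ contains a copy of $K_r$ we are done, since $K_r$ has exactly $\binom r3$ triangles; so assume it does not. The plan here is an induction on $r$: inside $H$ one wants to locate either a vertex whose neighborhood contains a $K_{r-1}$ (which immediately produces a $K_r$), or a clique $Q$ on $q\geq 1$ vertices such that $G-Q$ retains the property that every edge lies in at least $K$ triangles (note $\chi(G-Q)\geq r-q$ automatically) and the triangles meeting $Q$ account for the difference $\binom r3-\binom{r-q}{3}$; for instance, when $G$ is the join of a $q$-clique with a graph $G'$ satisfying $\chi(G')\geq r-q$, a Vandermonde-type identity shows $\binom r3$ is inherited from the inductive bound $\binom{r-q}{3}$. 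Locating such structure should draw on Gallai's theorem describing the subgraph of $H$ spanned by its degree-$(r-1)$ vertices --- whose blocks are cliques and odd cycles --- together with structural constraints on $r$-critical graphs with few vertices.

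I expect this last regime to be the main obstacle, for two reasons. First, one must convert the \emph{global} information $\chi(H)=r$ into a \emph{local} surplus of triangles, whereas an $r$-critical graph far from $K_r$ can be quite sparse per vertex, so the bound $3t\geq Ke$ genuinely falls short and a finer structural induction is needed. Second, running any such induction by deleting vertices threatens the hypothesis that every edge lies in at least $K$ triangles; this is the only point at which the freedom to take $K$ a large absolute constant is used, since deleting one vertex lowers the number of triangles through any fixed edge by at most one, so boundedly many deletions can be charged against the slack in $K$. The remaining directions --- $n$ large, $K_r\subseteq G$, or $r$ small --- are all essentially immediate.
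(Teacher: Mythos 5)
Your reduction to the case $n=|V(H)|<r^2/K$ is fine (and in fact does not even need Kostochka--Yancey: since $H$ is $r$-critical, $\delta(H)\geq r-1$ gives $e(H)\geq (r-1)n/2$, and then $3t(G)\geq K e(H)$ already yields $t(G)\geq\binom r3$ once $n\gtrsim r^2/K$). But this reduction discards essentially none of the difficulty, and the part you flag as ``the heart of the proof'' is left as a vague inductive plan that does not amount to a proof. The regime that actually carries all the content in the paper is $n=O(r)$, which lies comfortably inside your $n<r^2/K$ cut, so your ``dense regime'' is a side remark, not one of the two halves of the argument.

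Concretely, the inductive sketch has gaps that I do not see how to close. Gallai's description of the low-vertex subgraph of a critical graph, and in particular his join decomposition, only produces a nontrivial join when $|V(H)|\leq 2r-2$; that is exactly Corollary~\ref{cor:gallai-triangle} of the paper, and once $|V(H)|\geq 2r-1$ no such join exists and the Vandermonde bookkeeping you outline has nothing to attach to. Peeling off a clique $Q$ and inducting on $r-|Q|$ runs into the problem that $G-Q$ no longer has the property that every edge lies in $\geq K$ triangles, and ``charging against the slack in $K$'' is precisely the issue: the number of vertex deletions you would need to make is unbounded in $r$, so it cannot be absorbed by a fixed constant $K$. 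The paper does not induct on $r$ by deleting cliques; instead it iterates the Tur\'an peeling argument of Section~\ref{sec:basic-turan} with a modified weight that steers the peeled independent sets toward large cliques, quantifies the gain from the stopping conditions, and then closes the residual cases with three external inputs that your sketch does not touch: Ramsey--Tur\'an (Theorem~\ref{prop:ramsey-turan}) to gain over Tur\'an when $\omega$ is small, Vu's and Harris's locally sparse coloring theorems, and a new probabilistic degeneracy coloring lemma (Lemma~\ref{thm:degen-coloring}, proved via Azuma and Talagrand) for the truly linear-sized critical graphs in Proposition~\ref{thm:linear-graphs}. (Minor arithmetic: your small-$r$ base case should read $r\leq K+2$, not $r\leq 3K+2$, once the factor of $3$ in $3t(G)\geq K\binom r2$ is carried correctly.)
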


 Together \cref{prop:ramsey-critical,thm:main-chromatic-weak} imply \cref{thm:main-triangle-ramsey} for $t\geq K+2$. One could also consider $q$-color generalization of all of these problems. The $q$-color generalizations of \cref{thm:chvatal-size-ramsey-clique,thm:main-triangle-ramsey,prop:ramsey-critical} hold by the same proofs. We also conjecture that the $q$-color generalization of  \cref{conj:clique-clique} holds. 

The remainder of the paper is devoted to the proof of \cref{thm:main-chromatic-weak}. In \cref{sec:basic-turan}, we introduce a short ``Tur\'{a}n argument" that proves \cref{thm:main-triangle-ramsey} asymptotically. To refine this argument and achieve the exact bound, we draw from a wide range of works on graph coloring in locally sparse graphs, which we summarize in \cref{sec:coloring}. One of the main lemmas in that section, \cref{thm:degen-coloring}, uses a probabilistic coloring argument and relies on concentration inequalities, including the Azuma-Hoeffding inequality and Talagrand's inequality. We first prove \cref{thm:main-chromatic-weak} for graphs with $O(r)$ vertices in \cref{sec:linear-size} and then build upon this result to prove the full theorem in \cref{sec:main-proof}. Finally, we discuss some further directions of research in \cref{sec:conclusion}.

\medskip
\noindent \emph{Notation:} We mostly use standard graph theory notation. For a graph $G$ we write $|G|$ for the number of vertices, $e(G)$ for the number of edges, and $t(G)$ for the number of triangles.

\medskip
\noindent \emph{Acknowledgements:} We learned the problem of determining $r_F(H)$ and \cref{conj:clique-clique} from Sam Spiro at the 2023 Workshop on Ramsey Theory held at UCSD. We thank Sam Spiro for telling us of the problem, David Conlon for helpful conversations,  and Dhruv Mubayi, Andrew Suk, and Jacques Verstraete for organizing the workshop.

\section{The basic Tur\'an argument}
\label{sec:basic-turan}

In this section, we give a short proof of the asymptotic bound $r_{K_3}(K_t)\geq(1-o(1))\binom{r(K_t)}3$. This follows from a clever application of Tur\'an's theorem. We give this proof not only for its aesthetic appeal, but also because we build upon this result and its proof technique later in the paper.

First, we provide a proof of \cref{prop:ramsey-critical} for completeness. 

\begin{proof}[Proof of \cref{prop:ramsey-critical}]
Write $r=r(K_t)$. By definition, there is a two-edge-coloring of $K_{r-1}$ which avoids monochromatic copies of $K_t$. Call this coloring $\phi\colon\binom{[r-1]}2\to[2]$.

Suppose for the sake of contradiction that $G$ is $(r-1)$-colorable. Thus, there is a partition $V(G)=V_1\sqcup\cdots\sqcup V_{r-1}$ into independent sets. We can construct a two-edge-coloring of $G$ by assigning $e=\{u,v\}$ between the vertex parts $V_i,V_j$ the color $\phi(\{i,j\})$. Then any monochromatic copy of $K_t$ in this coloring of $G$ has vertices in distinct parts, so it corresponds to a monochromatic copy of $K_t$ in the coloring of $K_{r-1}$. This is a contradiction, proving part (a).

If $G$ is $K_t$-Ramsey, clearly removing all edges of $G$ that are not contained in a $K_t$ preserves this property. This proves part (b).
\end{proof}

It is worth pointing out that \cref{prop:ramsey-critical}(b) can be strengthened to the following. If $G$ is $K_t$-Ramsey critical and $f$ is an edge of $G$, then every edge-coloring of $G\setminus f$ with colors red and blue without a monochromatic $K_t$ has the property that there is a red copy of $K_t\setminus e$ in which $f$ is the missing edge, and a blue copy of $K_t\setminus e$ in which $f$ is the missing edge. Indeed, if not, we could extend this coloring to a red-blue edge-coloring of $G$ without a monochromatic $K_t$, contradicting $G$ is $K_t$-Ramsey critical. In particular, this implies that any edge of a $K_t$-Ramsey critical graph is the intersection of two copies of $K_t$ in the graph. 

We now begin the Tur\'an argument. We repeatedly use the following version of Tur\'an's theorem, which follows from the standard version applied to the complement of the graph.

\begin{thm}[Tur\'an]
\label{thm:turan}
For any graph $G$,
\[e(G)\geq\frac{|G|^2}{2\alpha(G)}-\frac{|G|}2.\]
\end{thm}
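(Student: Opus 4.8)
The plan is to deduce this from the standard form of Tur\'an's theorem by passing to the complement. Write $n=|G|$ and $\alpha=\alpha(G)$, and let $\bar G$ be the complement of $G$ on the same vertex set. An independent set in $G$ is precisely a clique in $\bar G$, so $\omega(\bar G)=\alpha$; in particular $\bar G$ contains no copy of $K_{\alpha+1}$.

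Next I would invoke Tur\'an's theorem in its usual shape: an $n$-vertex graph with no $K_{\alpha+1}$ has at most $\bigl(1-\tfrac1\alpha\bigr)\tfrac{n^2}{2}$ edges. (The exact extremal graph is the complete balanced $\alpha$-partite Tur\'an graph, but the clean upper bound $(1-1/\alpha)n^2/2$ is all that is needed here, and it is slightly larger than the exact count.) Applying this to $\bar G$ gives $e(\bar G)\le\bigl(1-\tfrac1\alpha\bigr)\tfrac{n^2}{2}$, and then subtracting from the total number of pairs, using $e(G)+e(\bar G)=\binom n2$, yields
\[
e(G)\ \ge\ \binom n2-\Bigl(1-\frac1\alpha\Bigr)\frac{n^2}{2}\ =\ \frac{n^2-n}{2}-\frac{n^2}{2}+\frac{n^2}{2\alpha}\ =\ \frac{n^2}{2\alpha}-\frac n2,
\]
which is exactly the claimed inequality.

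There is no genuine obstacle here; the only point to watch is to use the clean weakened form of Tur\'an's bound so that the additive term comes out to be precisely $-n/2$ rather than something messier. If one prefers an argument that avoids Tur\'an's theorem altogether, the Caro--Wei bound $\alpha(G)\ge\sum_{v}\frac{1}{d(v)+1}$ combined with convexity of $x\mapsto\frac{1}{x+1}$ gives $\alpha(G)\ge\frac{n}{2e(G)/n+1}=\frac{n^2}{2e(G)+n}$, and rearranging this inequality produces the same conclusion.
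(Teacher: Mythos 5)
Your proof is correct and follows exactly the route the paper indicates: the paper introduces this statement with the remark that it ``follows from the standard version applied to the complement of the graph,'' which is precisely your complement-plus-$\binom n2$ computation. (Your Caro--Wei remark is a valid alternative but not what the paper uses.)
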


The next theorem is the main result of the section. 

\begin{thm}
\label{thm:basic-turan-bound}
For any $r$-chromatic graph $G$,
\[t(G)+\frac12 e(G)\geq\binom r3+\frac12 \binom r2.\]
\end{thm}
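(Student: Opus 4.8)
The plan is to prove the bound by induction on $r$, the essential idea being to delete a \emph{maximum} independent set rather than a single vertex. Writing $f(r) = \binom r3 + \tfrac12\binom r2$, the motivation for this choice is the telescoping identity $f(r) - f(r-1) = \binom{r-1}2 + \tfrac12(r-1) = \tfrac{(r-1)^2}2$, so I will aim to show that passing from an $(r-1)$-chromatic graph to an $r$-chromatic one gains exactly $\tfrac{(r-1)^2}2$ in the quantity $t(G) + \tfrac12 e(G)$.

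First I would reduce to the case $\delta(G) \ge r-1$ by replacing $G$ with an $r$-critical induced subgraph (an induced subgraph minimal with respect to being $r$-chromatic); this only decreases $t(G)$ and $e(G)$, and every vertex of an $r$-critical graph has degree at least $r-1$. Now let $I$ be a maximum independent set and set $\alpha = \alpha(G) = |I|$. Since $\chi(G) \le \chi(G-I) + 1$, the graph $G-I$ is $(r-1)$-chromatic, so the inductive hypothesis gives $t(G-I) + \tfrac12 e(G-I) \ge f(r-1)$. Because $I$ is independent, each triangle of $G$ has at most one vertex in $I$, and all edges incident to $I$ leave $I$ and are counted once in $\sum_{u\in I}\deg(u)$, so
\[ t(G) = t(G-I) + \sum_{u\in I} e(G[N(u)]), \qquad e(G) = e(G-I) + \sum_{u\in I}\deg(u). \]

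The heart of the argument is to bound $\sum_{u\in I}\bigl(e(G[N(u)]) + \tfrac12\deg(u)\bigr)$ from below by $\tfrac{(r-1)^2}2$. For each $u \in I$, \cref{thm:turan} applied to $G[N(u)]$ gives $e(G[N(u)]) \ge \tfrac{\deg(u)^2}{2\,\alpha(G[N(u)])} - \tfrac{\deg(u)}2$, and since $N(u) \subseteq V(G)$ we have $\alpha(G[N(u)]) \le \alpha$; hence $e(G[N(u)]) + \tfrac12\deg(u) \ge \tfrac{\deg(u)^2}{2\alpha}$. Summing over $u \in I$, applying Cauchy--Schwarz, and using $\deg(u) \ge r-1$ together with $|I| = \alpha$,
\[ \sum_{u\in I}\Bigl(e(G[N(u)]) + \tfrac12\deg(u)\Bigr) \ge \frac{1}{2\alpha}\sum_{u\in I}\deg(u)^2 \ge \frac{1}{2\alpha}\cdot\frac{\bigl(\sum_{u\in I}\deg(u)\bigr)^2}{|I|} \ge \frac{((r-1)\alpha)^2}{2\alpha^2} = \frac{(r-1)^2}{2}. \]
Adding this to the bound for $G-I$ yields $t(G) + \tfrac12 e(G) \ge f(r-1) + \tfrac{(r-1)^2}2 = f(r)$, closing the induction (the base case $r \le 2$ being immediate, and $\alpha(G[N(u)]) \ge 1$ holding since $\delta(G) \ge r-1 \ge 1$).

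The arithmetic is routine; the real obstacle is finding this reduction. The naive induction that deletes a single minimum-degree vertex loses too much --- it already fails for $G = C_5$, where $G-v$ over-performs --- and the fix is to notice that deleting a maximum independent set makes three facts conspire with no slack: the $-\tfrac{|G|}2$ term in the quoted form of Turán's theorem exactly absorbs the $+\tfrac12\deg(u)$ we carry, Cauchy--Schwarz on the degrees is lossless when they are equal, and the choices $|I| = \alpha$ and $\alpha(G[N(u)]) \le \alpha$ make the constant come out to precisely $\tfrac{(r-1)^2}2$. As a sanity check, both extremal configurations $K_r$ and $C_5$ turn every inequality in the argument into an equality.
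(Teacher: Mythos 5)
Your proof is correct and follows essentially the same route as the paper: pass to an $r$-critical subgraph, peel off a maximum independent set, apply Tur\'an to the neighborhoods, and use the minimum-degree bound $\delta \ge r-1$. The paper unrolls this into an explicit telescoping sum over $r$ iterations rather than phrasing it as an induction, and dispenses with the Cauchy--Schwarz step (which is superfluous once you note $\deg(u)\ge r-1$ and $|I|=\alpha$), but these are cosmetic differences.
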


\begin{proof}
We construct a sequence of (induced) subgraphs of $G$
\[G\supseteq G_0\supseteq G_0'\supseteq G_1\supseteq G_1'\supseteq \cdots \supseteq G_{r-1}\supseteq G_{r-1}'\] satisfying $\chi(G_i)=\chi(G_i')=r-i$.

Let $G_0$ be any subgraph of $G$ satisfying $\chi(G_0)=r$. Given $G_i$ (which satisfies $\chi(G_i)=r-i$), let $G_i'$ be an $(r-i)$-critical subgraph of $G_i$. Clearly $\chi(G_i')=r-i$. Then let $I_i$ be an independent set in $G_i'$ with maximal size and define $G_{i+1}=G_i'\setminus I_i$. It follows that $\chi(G_{i+1})=r-i-1$. To see this, note that $\chi(G_{i+1})<\chi(G_i')$ since $G_i'$ is critical, and $\chi(G_i')\leq 1+\chi(G_{i+1})$ since $I_i$ is an independent set. 

For a graph $H$ and a vertex $v$, write $t_H(v)$ for the number of triangles in $H$ that contain $v$. By Tur\'an's theorem (\cref{thm:turan}) we have the bound
\[t_H(v)=e(G[N_H(v)])\geq\frac{d_H(v)^2}{2\alpha(H)}-\frac{d_H(v)}{2}.\]

Since $G_i'$ is $(r-i)$-critical, we know that $\delta(G_i')\geq r-i-1$. Summing the above inequality gives
\[\sum_{i=0}^{r-1}\sum_{v\in I_i}t_{G_i'}(v)\geq\sum_{i=0}^{r-1}\sum_{v\in I_i}\frac{d_{G_i'}(v)^2}{2\alpha(G_i')}-\frac{d_{G_i'}(v)}{2}\geq\sum_{i=0}^{r-1}\frac{(r-i-1)^2}2-\frac 12\sum_{i=0}^{r-1}\sum_{v\in I_i}d_{G_i'}(v).\]
Since $I_i$ is an independent set, it follows that $\sum_{v\in I_i}t_{G_i'}(v)$ is the number of triangles removed when producing $G_{i+1}$ from $G_i'$. Thus $\sum_{i=0}^{r-1}\sum_{v\in I_i}t_{G_i'}(v)\leq t(G)$. Similarly, $\sum_{i=0}^{r-1}\sum_{v\in I_i}d_{G_i'}(v)\leq e(G)$. The first term on the right-hand side evaluates to $\binom r3+\tfrac12\binom r2$, implying the desired result.
\end{proof}

\begin{cor}
\label{cor:ramsey-bound-weak}
$r_{K_3}(K_t)\geq(1-o(1))\binom{r(K_t)}3$.
\end{cor}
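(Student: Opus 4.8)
The plan is to obtain \cref{cor:ramsey-bound-weak} directly by feeding \cref{prop:ramsey-critical} into \cref{thm:basic-turan-bound}. Let $G$ be any $K_t$-Ramsey graph. Since deleting edges never increases the number of triangles, $t(G)$ is minimized (over $K_t$-Ramsey graphs) by a $K_t$-Ramsey critical graph, so we may pass to a $K_t$-Ramsey critical subgraph of $G$ and assume $G$ itself is $K_t$-Ramsey critical. Write $r=r(K_t)$. By \cref{prop:ramsey-critical}(a), $G$ is $r$-chromatic, so \cref{thm:basic-turan-bound} gives
\[t(G)+\tfrac12 e(G)\geq\binom r3+\tfrac12\binom r2\geq\binom r3.\]

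The only remaining point is to absorb the error term $\tfrac12 e(G)$. For this I would invoke \cref{prop:ramsey-critical}(b): every edge of $G$ lies in a copy of $K_t$ and hence in at least $t-2$ triangles. Summing this over all edges and observing that each triangle is counted three times yields $3\,t(G)\geq(t-2)\,e(G)$, i.e.\ $e(G)\leq\frac{3}{t-2}t(G)$. Substituting this into the displayed inequality gives
\[\paren{1+\frac{3}{2(t-2)}}t(G)\geq\binom r3,\]
and since the factor on the left depends only on $t$, this holds uniformly over all $K_t$-Ramsey (critical) graphs $G$. Taking the minimum over such $G$ yields $r_{K_3}(K_t)\geq\binom r3\big/\paren{1+\frac{3}{2(t-2)}}=(1-o(1))\binom r3$ as $t\to\infty$.

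I do not expect any genuine obstacle here; the corollary is essentially immediate from the two main results of the section. The one thing to be careful about is the meaning of the $o(1)$: it is measured as $t\to\infty$ (equivalently as $r=r(K_t)\to\infty$), and it is precisely the bound $e(G)\leq\frac{3}{t-2}t(G)$ — which relies on the edges of $G$ lying in cliques whose size grows with $t$ — that makes the $\tfrac12 e(G)$ contribution negligible next to $\binom r3$.
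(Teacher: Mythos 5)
Your proof is correct and takes essentially the same route as the paper: reduce to a $K_t$-Ramsey critical graph, apply \cref{prop:ramsey-critical}(a) with \cref{thm:basic-turan-bound}, and then use \cref{prop:ramsey-critical}(b) to bound $e(G)\leq\frac{3}{t-2}t(G)$ and absorb the error term. This matches the paper's argument step for step.
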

\begin{proof}
Write $r=r(K_t)$. Let $G$ be a $K_t$-Ramsey critical graph. By \cref{prop:ramsey-critical}, we know $G$ is $r$-chromatic and every edge is contained in a $K_t$, and thus is contained in at least $t-2$ triangles. This implies that $(t-2)e(G)\leq 3t(G)$. Thus by \cref{thm:basic-turan-bound}
\[\binom r3+\frac12\binom r2\leq t(G)+\frac 12 e(G)\leq \left(1+\frac3{2(t-2)}\right)t(G).\qedhere\]
\end{proof}

\section{Coloring results}
\label{sec:coloring}

Here we collect a handful of coloring results, starting with Gallai's theorem on small critical graphs. This allows us to prove \cref{thm:main-chromatic-weak} for graphs with at most $2r-2$ vertices. We then recall some results bounding the chromatic number of locally sparse graphs and prove a new result of this sort.

The join of two graphs is formed by taking their disjoint union and then adding all edges from one graph to the other.

\begin{thm}[{Gallai \cite{Gal63}}]
\label{thm:gallai}
For any $r$-critical graph $G$, if $|G|\leq 2r-2$, then $G$ can be written as the join of two smaller critical graphs.
\end{thm}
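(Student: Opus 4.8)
The plan is to pass to the complement and then exploit the structure of colour-critical graphs on few vertices. First I would reduce to a statement about complements. For nonempty graphs $G_1,G_2$ one has $\chi(G_1*G_2)=\chi(G_1)+\chi(G_2)$, and $G$ is the join of two nonempty graphs precisely when $\overline{G}$ is disconnected. If in addition $G$ is vertex-critical and $G=G_1*G_2$, then deleting any vertex $v$ of $G_i$ must drop $\chi(G_i)$ (otherwise $\chi(G-v)=\chi(G)$), so each $G_i$ is vertex-critical, hence $\chi(G_i)$-critical with $\chi(G_i)<r$ and $|G_i|<|G|$. Since ``$r$-critical'' in the sense of this paper is the same as ``$\chi(G)=r$ and $G$ vertex-critical'', it therefore suffices to prove that if $G$ is $r$-critical with $|G|\le 2r-2$ then $\overline{G}$ is disconnected. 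We may assume $r\ge 3$ and $G\ne K_r$, since $\overline{K_r}$ is edgeless, hence disconnected for $r\ge 2$.

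Next I would dispose of the regular case with Brooks' theorem: every $r$-critical graph is connected with minimum degree at least $r-1$, so if $G$ is $(r-1)$-regular then $\chi(G)=r>\Delta(G)$, and Brooks' theorem forces $G=K_r$ (excluded) or $G$ an odd cycle; the latter requires $r=3$ and $|G|$ odd and at most $4$, i.e.\ $G=K_3$, also excluded. Hence $G$ has a vertex of degree at least $r$.

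The remaining case is the crux. Here I would invoke Gallai's structure theorem for the subgraph $L$ of $G$ induced by the vertices of degree exactly $r-1$: every block of $L$ is a complete graph or an odd cycle. (This follows from the fact of Erd\H{o}s--Rubin--Taylor and Borodin that a connected graph none of whose blocks is a complete graph or an odd cycle is degree-choosable: if a component $C$ of $L$ violated the conclusion, one would properly $(r-1)$-colour the induced subgraph $G-C$, note that each $v\in C$ then has at least $d_C(v)$ colours available, and extend to a proper $(r-1)$-colouring of $G$ by the degree-choosability of $C$, contradicting $\chi(G)=r$.) With $L$ understood, I would bring in the hypothesis $|G|\le 2r-2$: since $\delta(G)\ge r-1$ it gives $\Delta(\overline{G})\le|G|-r\le r-2$, and it forces at least $2r-|G|\ge 2$ of the colour classes in any proper $r$-colouring to be singletons, whose representatives form a clique of $G$. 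One then argues that, unless $\overline{G}$ is disconnected, the Gallai-forest shape of $L$ together with this surplus of singleton classes and the restricted ways in which the (boundedly many) high-degree vertices can attach to $L$ permit merging and recolouring colour classes to produce a proper $(r-1)$-colouring of $G$ --- the desired contradiction.

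I expect this last step to be the main obstacle: even granting the structural lemma, converting ``$L$ is a Gallai forest and $|G|\le 2r-2$'' into an explicit colour-saving recolouring requires a careful case analysis (on the block structure of $L$ and on the neighbourhoods of the high-degree vertices), and the hypothesis must be used in a tight way, since the bound is sharp --- $\overline{C_{2r-1}}$ is $r$-critical on $2r-1$ vertices with connected complement. The complement reduction, the Brooks step, and the pigeonhole counts are routine by comparison.
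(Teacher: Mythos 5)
The paper does not prove this statement; it is cited as a classical result of Gallai and used as a black box (\cref{thm:gallai} is applied only in \cref{cor:gallai-triangle}). So there is no in-paper argument to compare against, and your proposal must stand on its own.

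Your preliminaries are correct: the reduction to showing that $\overline{G}$ is disconnected, the observation that a vertex-critical join has vertex-critical factors, the Brooks' theorem dispatch of the $(r-1)$-regular case, the Gallai-forest structure of the subgraph on vertices of degree $r-1$ (via degree-choosability of non-Gallai-trees), and the pigeonhole facts $\Delta(\overline G)\le n-r\le r-2$ and ``at least $2r-n\ge 2$ singleton colour classes.'' The sharpness example $\overline{C_{2r-1}}$ is also right. However, there is a genuine gap, and you flag it yourself: the final step, turning ``$L$ is a Gallai forest, $\overline G$ connected, $n\le 2r-2$'' into an explicit $(r-1)$-colouring of $G$, is only asserted, not carried out. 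That step is the entire nontrivial content of Gallai's theorem --- everything before it (complementation, Brooks, the low-vertex structure lemma, counting singleton classes) is standard bookkeeping that appears in essentially every treatment of critical graphs. As written, the proposal is a plausible reading list for a proof rather than a proof; the claimed ``merging and recolouring'' scheme is not specified even in outline (which classes merge, how the high-degree vertices and the block decomposition of $L$ interact with the recolouring), and it is not clear that this particular plan closes, as Gallai's original argument and its modern descendants do not proceed by recolouring an existing $r$-colouring in this fashion. To be accepted as a proof, this last step would need to be made explicit and verified.
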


\begin{cor}
\label{cor:gallai-triangle}
For any $r$-chromatic graph $G$, if $|G|\leq 2r-2$, then $t(G)\geq\binom r3$.
\end{cor}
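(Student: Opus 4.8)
The plan is to reduce to the case where $G$ is $r$-critical and then feed Gallai's decomposition (\cref{thm:gallai}) into the basic Turán bound (\cref{thm:basic-turan-bound}). First I would replace $G$ by a vertex-minimal induced subgraph $G'$ with $\chi(G')\ge r$. Such a $G'$ is $r$-critical: it must satisfy $\chi(G')=r$ exactly, since otherwise deleting a vertex would drop $\chi$ by at most one and hence keep it $\ge r$, contradicting minimality. Moreover $|G'|\le|G|\le 2r-2$ and $t(G')\le t(G)$, so it suffices to prove $t(G')\ge\binom r3$. The statement is trivial when $r\le 2$ (the right side is $0$), so assume $r\ge 3$; then $|G'|\ge\chi(G')=r\ge 3$, so $G'$ has more than one vertex and \cref{thm:gallai} applies.

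By \cref{thm:gallai} I would write $G'=G_1+G_2$ where each $G_i$ is $r_i$-critical; since the chromatic number of a join is the sum of the chromatic numbers, $r_1+r_2=r$, and $r_1,r_2\ge 1$ as both parts are nonempty. Sorting the triangles of a join according to how many of their vertices lie in $V(G_1)$ versus $V(G_2)$ gives the exact count
\[t(G')=t(G_1)+t(G_2)+e(G_1)|G_2|+|G_1|e(G_2).\]
Then I would apply \cref{thm:basic-turan-bound} to each $r_i$-chromatic graph $G_i$, in the form $t(G_i)\ge\binom{r_i}{3}+\tfrac12\binom{r_i}{2}-\tfrac12 e(G_i)$, and substitute. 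The correction $-\tfrac12 e(G_i)$ merges with the cross term $e(G_i)|G_j|$ into $e(G_i)\bigl(|G_j|-\tfrac12\bigr)$, and since $|G_j|\ge r_j\ge 1$ this coefficient is positive, so I may now also insert $e(G_i)\ge\binom{r_i}{2}$ (every $r_i$-chromatic graph has at least $\binom{r_i}{2}$ edges) together with $|G_j|\ge r_j$. After the $\tfrac12\binom{r_i}{2}$ terms cancel, what remains is
\[t(G')\ge\binom{r_1}{3}+\binom{r_2}{3}+\binom{r_1}{2}r_2+\binom{r_2}{2}r_1=\binom{r_1+r_2}{3}=\binom r3,\]
the middle step being the Vandermonde-type identity for $\binom{a+b}{3}$.

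The proof has no genuine obstacle; the one thing to notice is that \cref{thm:basic-turan-bound} bounds exactly the quantity $t(G_i)+\tfrac12 e(G_i)$, which is precisely the combination the join's cross terms require in order for the crude estimates $|G_i|\ge r_i$ and $e(G_i)\ge\binom{r_i}{2}$ to close the bound — and to close it tightly, since the choice $G'=K_r=K_1+K_{r-1}$ turns every inequality above into an equality. It is also worth noting that Gallai's theorem is used only once here, not recursively, so there is no induction to set up.
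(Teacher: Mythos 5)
Your proof is correct and follows essentially the same route as the paper: reduce to an $r$-critical subgraph, apply Gallai's join decomposition, count triangles in the join, and feed in \cref{thm:basic-turan-bound} together with $e(G_i)\ge\binom{r_i}{2}$ and $|G_i|\ge r_i$ so that the Vandermonde-type identity closes the bound. The only differences are cosmetic — you spell out the reduction to a critical subgraph and check the trivial case $r\le 2$ before invoking Gallai, which the paper leaves implicit.
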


\begin{proof}
Let $G'$ be an $r$-critical subgraph of $G$, satisfying $|G'|\leq|G|\leq 2r-2$. By Gallai's theorem (\cref{thm:gallai}), we can write $G'$ as the join of an $a$-critical graph $A$ and a $b$-critical graph $B$ for some $a,b\geq 1$ satisfying $a+b=r$.

We have the bounds $|A|\geq a$ and $e(A)\geq \binom a2$. By \cref{thm:basic-turan-bound} we also have $t(A)+\tfrac12 e(A)\geq \binom a3+\tfrac12\binom a2$. Similarly the analogous bounds hold for $B$. Then
\begin{align*}
t(G')
&=t(A)+e(A)|B|+|A|e(B)+t(B)\\
&=\left(t(A)+\frac12e(A)\right)+e(A)\left(|B|-\frac12\right)+e(B)\left(|A|-\frac12\right)+\left(t(B)+\frac12e(B)\right)\\
&\geq \binom a3+\frac12\binom a2+\binom a2\left(b-\frac12\right)+\binom b2\left(a-\frac12\right)+\binom b3+\frac12\binom b2\\
&= \binom {a+b}3=\binom{r}3.\qedhere
\end{align*}
\end{proof}

A powerful result of Alon--Krivelevich--Sudakov \cite{AKS99} bounds the chromatic number of locally sparse graphs. We use an extension due to Vu \cite{Vu02} on the list chromatic number.

\begin{thm}[{Vu \cite[Theorem 1.4]{Vu02}}]
\label{thm:loc-sparse-list-coloring}
There is a constant $K$ such that the following holds. Let $G$ be a graph and let $1<f<\Delta(G)^2$ be chosen such that $e(G[N(v)])\leq\Delta(G)^2/f$ for all $v\in G$. Then
\[\chi_\ell(G)\leq K\Delta(G)/\log f.\]
\end{thm}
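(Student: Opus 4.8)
The statement is a known result of Vu, refining the Alon--Krivelevich--Sudakov bound on the chromatic number of locally sparse graphs; we only sketch how one proves it. When $f$ is bounded the conclusion is vacuous (it follows from $\chi_\ell(G)\le\Delta(G)+1$), so assume $\Delta=\Delta(G)$ and $f$ are large, and write $L=\lceil K\Delta/\log f\rceil$ for a large absolute constant $K$; we may assume every list has size exactly $L$, since shrinking lists only makes the task harder. The plan is to build a proper list-coloring of $G$ by the semi-random (R\"odl nibble) method, in rounds. For each still-uncolored vertex $v$ we track its remaining list $L_t(v)$ (colors of $L(v)$ not yet used on a colored neighbor), its uncolored degree $D_t(v)$ (number of uncolored neighbors), and a rescaled local-sparsity parameter bounding the edges spanned by the uncolored neighbors of $v$ relative to $L_t(v)^2$. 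Initially $L_0(v)=L$, $D_0(v)\le\Delta$, and the sparsity hypothesis reads $e(G[N(v)])\le\Delta^2/f$.

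In one round, every uncolored vertex independently ``wakes up'' with probability $q$ (a small parameter chosen according to the current ratio $L_t/D_t$); a woken vertex picks a uniformly random color from its current list and retains it, becoming permanently colored, if no woken neighbor picked the same color; we then delete colored vertices and update lists. The crux is the one-round expectation estimate: $\E[D_{t+1}(v)]$ drops from $D_t(v)$ by roughly the factor $1-q\cdot\mathrm{ret}_t$, where $\mathrm{ret}_t\approx e^{-qD_t(v)/L_t(v)}$ is the retention probability, whereas $\E[L_{t+1}(v)]$ drops strictly more slowly because distinct \emph{non-adjacent} neighbors of $v$ that both get colored frequently receive the same color and so together cost $v$ only one color. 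Quantifying this ``clumping'' is exactly where $e(G[N(v)])\le\Delta^2/f$ enters: the number of color-coincidences among woken neighbors of $v$ that translate into a saved color is proportional to (the number of non-adjacent woken pairs inside $N(v)$)$/L_t(v)$, and the $\le\Delta^2/f$ edges of $G[N(v)]$ only subtract a lower-order correction. Consequently the ratio $L_t(v)/D_t(v)$ improves by a definite factor each round, and summing the savings over the whole process (which runs for $O(\log\Delta)$ rounds, with $D_t$ decreasing geometrically) is precisely what lets the initial ratio be as small as $\Theta(1/\log f)$ and still be driven above $1$.

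To convert these estimates into an actual coloring one needs concentration: after each round, with high probability $D_{t+1}(v)$, $L_{t+1}(v)$, and the local-sparsity parameter are all within a small factor of their expectations, for every vertex simultaneously. Here $D_{t+1}(v)$ is controlled by the Azuma--Hoeffding inequality, exposing the wake-up bits and the color choices in two stages; $L_{t+1}(v)$ (or rather a certifiable proxy, such as the number of colors appearing on $N(v)$) is Lipschitz and certifiable, so Talagrand's inequality applies; the sparsity parameter is handled similarly. A union bound over the $O(|G|)$ vertices and $O(\log\Delta)$ rounds shows that with positive probability all invariants hold throughout. We iterate until every uncolored vertex $v$ satisfies $L_t(v)\ge D_t(v)+1$, at which point the partial coloring is completed greedily.

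I expect the main obstacle to be the bookkeeping that keeps all three invariants (list size, uncolored degree, rescaled local sparsity) simultaneously under control: one must choose the wake-up probability $q$ in each round and track how the sparsity parameter degrades, so that $L_t/D_t$ provably increases while neighborhoods stay sparse enough \emph{relative to the current list sizes} for the clumping estimate to remain valid. The concentration step for $L_{t+1}(v)$ is the other delicate point, since whether a given color is retained for $v$ depends on the choices of $v$'s neighbors and of their neighbors, so a single vertex's choice can in principle affect many others; this is managed by a more careful ``typical-case'' Lipschitz bound (or by setting aside a small exceptional set of vertices and recoloring them separately). We do not carry this out and instead invoke Vu's theorem directly.
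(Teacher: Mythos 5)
The paper does not prove this theorem; it simply cites it as an external result of Vu \cite[Theorem~1.4]{Vu02}, so the ``paper's proof'' is just the citation. Your sketch of the semi-random/nibble method --- tracking list sizes and uncolored degrees, exploiting color-clumping among non-adjacent neighbors (where the local sparsity $e(G[N(v)])\le\Delta^2/f$ enters), and concentrating via Azuma--Hoeffding and Talagrand --- is a faithful outline of how Vu's argument actually goes, and you correctly end by deferring to Vu's theorem, which is exactly what the paper does.
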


The following result of Harris gives an upper bound on the chromatic number of an $n$-vertex graph that depends on the number of triangles in the graph and the maximum number of triangles a vertex is in. 

\begin{thm}[{Harris \cite[Theorem 2.3]{Har19}}]
\label{thm:loc-sparse-vertex-triangle-coloring}
There is a constant $K$ such that the following holds. Let $G$ be a graph with $n$ vertices and $t$ triangles, such that every vertex is contained in at most $y$ triangles. We have
\[\chi(G)\leq K\left(\sqrt{\frac{n}{\log n}}+\frac{t^{1/3}{\lg}{\lg}(t^2/y^3)}{{{\lg}}^{2/3}(t^2/y^3)}\right).\]
where ${\lg}(x) = \max(\log(x), 1)$.
\end{thm}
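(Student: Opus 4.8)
The right-hand side is a sum of two terms, and the plan is to show that each one corresponds to a regime handled by a known colouring technique: Vu's local-sparsity bound (\cref{thm:loc-sparse-list-coloring}) when the maximum degree is small, and an iterative ``extract a large independent set'' argument when the graph is large but every neighbourhood is sparse, so that the graph behaves almost like a triangle-free one. Write $r^*$ for the parenthesised quantity; the strategy is a degree decomposition that routes each vertex to the appropriate tool, together with a peeling argument that glues the two together.

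Fix a degree threshold $\Delta_0$ of order $r^*\log(r^{*2}/y)$, chosen so that $\Delta_0/\log(\Delta_0^2/y)\asymp r^*$. On the subgraph spanned by vertices of degree at most $\Delta_0$, every neighbourhood spans at most $y$ edges, so \cref{thm:loc-sparse-list-coloring} with $f=\Delta_0^2/y$ colours it with $O(r^*)$ colours (the degenerate case where the maximum degree is at most $e\sqrt y$ is handled greedily, using $\sqrt y=O(r^*)$, which one checks from the hypothesis $y\le t^{2/3}/10$). For a vertex $v$ of degree exceeding $\Delta_0$ one has $\abs{N(v)}>\Delta_0$ while $e(G[N(v)])\le y$, so the neighbourhood is sparse relative to its size; by Tur\'an's theorem (\cref{thm:turan}) together with the Ramsey estimate $R(3,k)=O(k^2/\log k)$ applied to $G[N(v)]$, the graph $G$ contains an independent set of size $\Omega(\sqrt{\Delta_0\log\Delta_0})$, and considerably more when $y\ll\Delta_0$. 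Peeling off such independent sets repeatedly (which preserves the hypotheses, since they pass to induced subgraphs) drives the number of rounds down to $O(\sqrt{n/\log n})$, exactly as in the triangle-free case, because the independence-number lower bound is of the form $\Omega(|G'|/r^*(G'))$ with $r^*$ itself decreasing as vertices are removed; once the maximum degree has dropped to $\Delta_0$ we finish by \cref{thm:loc-sparse-list-coloring}. Optimising $\Delta_0$ to balance the two contributions produces the stated bound, the shape $\log\log(t^2/y^3)/\log^{2/3}(t^2/y^3)$ arising from the logarithmic factors in Vu's theorem; the exceptional ``near-clique'' neighbourhoods, where the extracted independent set is small, must be absorbed by applying \cref{thm:loc-sparse-list-coloring} at the appropriate local scale.

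The hard part is the high-degree regime. One has no bound whatsoever on $e(G)$ in terms of $t$ and $y$ --- for instance $G$ may contain an arbitrarily large complete bipartite graph, which has no triangle at all --- so the number of high-degree vertices is uncontrolled, and any argument that splits the high-degree part into $\Theta(\log n)$ degree classes with disjoint palettes is hopeless. The peeling must therefore rest on the Ramsey / Alon--Krivelevich--Sudakov input (large independent sets in locally sparse graphs) rather than on any edge count. The most delicate point is the bookkeeping required to keep the number of peeling rounds at $O(r^*)$ rather than $O(r^*\log n)$: this forces one to ensure that at every stage the removed independent set has size $\Omega(|G'|/r^*(G'))$ with $r^*$ evaluated at the \emph{current} subgraph, and hence to use the sharp semi-random colouring estimates underlying \cref{thm:loc-sparse-list-coloring} rather than any cruder independence-number bound.
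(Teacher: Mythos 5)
This statement is Theorem~2.3 of Harris's paper, which the authors cite as an external result; the paper under review does not prove it, so there is no internal proof to compare against. Your proposal should therefore be judged against Harris's argument, which is substantially more technical than the sketch you give, and I believe your sketch has a gap that cannot be closed.

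The low-degree part of your plan is essentially fine. Thresholding at $\Delta_0$, applying Vu's bound (\cref{thm:loc-sparse-list-coloring}) to the subgraph of vertices of degree at most $\Delta_0$ with the local-sparsity parameter $f=\Delta_0^2/y$, and absorbing the edge case $\Delta\le e\sqrt y$ via a greedy coloring all parse, and with some calculation one can in fact verify $\sqrt y=O(r^*)$ from $y\le t^{2/3}/10$ together with the precise shape of the second term. (You understate how much this depends on the $\log\log/\log^{2/3}$ correction: $\sqrt y=O(t^{1/3})$ alone does not do it.) The serious problem is the high-degree regime, and you have identified the difficulty but not resolved it. Extracting an independent set from the sparse neighbourhood $N(v)$ of a high-degree vertex and peeling is doomed to lose a $\log n$ factor: the hypothesis that every induced subgraph $G'$ has $\alpha(G')\ge|G'|/k$ only gives fractional chromatic number $O(k)$, and there are graphs (iterated shift graphs, for example) where this property holds at every scale while $\chi$ is larger by a $\Theta(\log n)$ factor. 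So ``keep the number of rounds at $O(r^*)$ by removing sets of size $\Omega(|G'|/r^*(G'))$'' is not something independence-number lower bounds can supply; you recognise this as the delicate point and then wave at ``the sharp semi-random colouring estimates underlying Vu's theorem,'' but those estimates \emph{require} a maximum-degree bound, which is exactly what you do not have in the high-degree part. Moreover your intermediate claim that a sparse $G[N(v)]$ on more than $\Delta_0$ vertices yields an independent set of size only $\Omega(\sqrt{\Delta_0\log\Delta_0})$ is too pessimistic in the right cases (Tur\'an gives $\Omega(\Delta)$ when $\Delta\gg y$) and still not the right quantity in the cases that matter; you never connect the size of the extracted set to $|G'|/r^*$. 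What is actually missing is a coloring argument that handles vertices of \emph{unbounded} degree using only their bounded triangle count; this is the technical heart of Harris's theorem, and a degree decomposition plus na\"ive peeling plus ``bookkeeping'' does not reproduce it.
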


A graph is $d$-degenerate if there is an ordering $v_1, v_2, \ldots, v_n$ of its vertices (called a degeneracy ordering) such that for each $i=1,\ldots,n$, vertex $v_i$ has at most $d$ neighbors $v_j$ with $j<i$.

We also will need a result that shows that certain $d$-degenerate locally sparse graphs have chromatic number slightly smaller than $d$. Many ideas of this proof are inspired by a result of Molloy--Reed \cite{MR02} on the chromatic number of locally sparse graphs with bounded maximum degree. 

For a $d$-degenerate graph with degeneracy order $v_1, v_2, \ldots, v_n$, we write $N^+(v_i)=N(v_i)\cap\{v_1,\ldots,v_{i-1}\}$ for the forward neighborhood of $v_i$. By definition, $|N^+(v_i)|\leq d$ for all $i\in[n]$. 

\begin{lemma}[{cf. \cite[Theorem 10.5]{MR02}}]
\label{thm:degen-coloring}
For all $0<\epsilon, \delta < 1$, there exists $\gamma > 0$ such that the following holds for sufficiently large $d$. Let $G$ be a $d$-degenerate graph with degeneracy order $v_1, v_2, \ldots, v_n$ such that the following two conditions hold. 
\begin{enumerate}
    \item $\abs{N^+(v_j)} \leq d$ for all $1 \leq j \leq n$, and $\abs{N^+(v_j)} \leq (1 - \epsilon) d$ for $1 \leq j \leq n - \delta d / 4$.
    \item For any $v \in G$, we have $e(N^+(v)) \leq (1 - \delta) \binom{d}{2}$.
\end{enumerate}
Then $\chi(G) \leq (1 - \gamma) d$.
\end{lemma}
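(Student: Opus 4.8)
The plan is to prove the lemma by a randomized coloring argument modeled on the Molloy--Reed ``wasteful coloring procedure'' (hence the reference to \cite{MR02}), carried out along the degeneracy order. Fix a palette $[N]$ with $N = (1-\gamma)d$; the goal is to build a proper coloring of $G$ using these colors. For a partial proper coloring, write $L(v)$ for the set of colors still available at $v$, and $d^+(v)$ for the number of as-yet-uncolored vertices in $N^+(v)$. The key reduction is that it suffices to reach a partial proper coloring in which $|L(v)| > d^+(v)$ for every $v$: from there we color the remaining vertices greedily in degeneracy order, and when we reach an uncolored vertex $v$ the number of forbidden colors is at most $(N - |L(v)|) + d^+(v) < N$, so a color is always available. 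Thus the whole problem reduces to reaching such a configuration.

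To reach it, run a bounded number $T = T(\delta)$ of rounds of the following step: every currently uncolored vertex simultaneously picks a uniformly random available color, tentatively takes it, and then permanently keeps it unless a neighbor picked the same color --- with the bookkeeping of ``which neighbor counts'' handled carefully with respect to the degeneracy order, so that only forward-neighborhoods (of size $\le d$, by condition~(1)) govern whether a vertex is kept. Two features drive the analysis. First, condition~(1) bounds $|N^+(v)| \le d$; since $N = (1-\gamma)d$ with $\gamma$ small, this keeps the relevant keep-probabilities bounded below by an absolute constant, so residual forward-degrees $d^+(v)$ shrink by a constant factor each round. Second, condition~(2) supplies at least $\delta\binom{d}{2}$ non-edges inside each $N^+(v)$: non-adjacent forward-neighbors of $v$ may receive the same color, so over the course of the procedure the gap $|L(v)| - d^+(v)$ gains $\Omega_\delta(d)$ from such color collisions. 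For a vertex $v$ of small forward-degree this gap is already comfortably positive, roughly $\ge (\epsilon - \gamma)d$, and only needs to remain so; but for the at most $\delta d/4$ vertices of near-maximum forward-degree permitted by condition~(1) the gap starts negative, around $-\gamma d$, and the point is that the $\Omega_\delta(d)$ gain from collisions overwhelms this deficit after $T(\delta)$ rounds. This is exactly what forces $\gamma$ to be a small constant depending on both $\epsilon$ and $\delta$.

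The quantities $|L(v)|$ and $d^+(v)$ are functions of the random color choices, and to finish we need them concentrated around their expectations sharply enough that the bad event ``$|L(v)| \le d^+(v)$ after round $T$'' has probability $o(1/d)$ for each of the $\le \delta d/4$ high-forward-degree vertices; a union bound over just these vertices then produces a valid partial coloring with positive probability, after which the greedy completion above finishes the job. (The vertices of small forward-degree require no probabilistic input --- their gap is positive throughout.) The concentration is obtained round by round: within a single round one applies Talagrand's inequality to the relevant functionals of that round's fresh colors, and one stitches the $O_\delta(1)$ rounds together with the Azuma--Hoeffding inequality.

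I expect the concentration step to be the main obstacle, for the usual reason: the natural functionals (such as the number of forward-neighbors of $v$ that get colored in a given round) can have large worst-case Lipschitz constants, since a single color change can in principle flip the fate of many vertices simultaneously. Handling this needs the standard but delicate toolkit --- careful conditioning, the certifiable form of Talagrand's inequality, and bounding the contribution of atypical outcomes separately --- together with enough care in the design of the procedure that the degeneracy order is respected and the potentially enormous backward-degrees of a $d$-degenerate graph never interfere (only forward-neighborhoods, of bounded size, should determine keeps and enter the tracked gap). A secondary technical point is pinning down the implicit constant in the $\Omega_\delta(d)$ collision gain and verifying it beats the $\gamma d$ deficit with room to spare, which is what ultimately determines the admissible range of $\gamma$.
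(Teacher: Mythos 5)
Your high-level architecture (use condition (2) to manufacture color collisions inside forward neighborhoods, union-bound only over the $\leq \delta d/4$ dangerous vertices, concentrate via Talagrand and Azuma, then finish greedily in the degeneracy order) matches the paper's. But the mechanism you propose --- an iterated Molloy--Reed ``wasteful coloring procedure'' over the full palette $[N]$ with $N=(1-\gamma)d$, tracking the invariant $|L(v)|>d^+(v)$ --- has a genuine gap in this setting. The lemma makes no assumption on maximum degree, only on degeneracy: a vertex $v$ with small $|N^+(v)|$ can have arbitrarily many backward neighbors. In your multi-round scheme those backward neighbors also participate in the random rounds, and since they draw from the full palette of $N$ colors, after even one round they can easily occupy all $N$ colors and drive $L(v)$ to $\emptyset$ while $v$ is still uncolored. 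The invariant $|L(v)|>d^+(v)$ then fails for no fault of $v$'s forward neighborhood, and the greedy completion has nowhere to go. The phrase ``so that only forward-neighborhoods govern whether a vertex is kept'' fixes the propriety of the partial coloring but does not rescue the list sizes, because $L(v)$ is depleted by \emph{all} colored neighbors regardless of orientation.

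The paper resolves exactly this issue by departing from the iterated Molloy--Reed template in two decisive ways that your sketch does not contemplate. First, it runs only a \emph{single} random round, and restricts the random coloring to a tiny palette of $s=\lceil \epsilon d/2\rceil$ colors (far smaller than the target $(1-\gamma)d$). Since the partial coloring uses only $s$ colors, the backward neighbors of any later vertex can collectively forbid at most $s$ colors during the greedy pass --- the unbounded backward degree becomes harmless. Second, the randomly colored set is exactly $\cA=\{v_1,\dots,v_{n-\lceil\delta d/4\rceil}\}$: the dangerous high-forward-degree vertices (which by condition (1) all sit in the tail $\cB$) are never touched in the random phase, and their backward neighbors are also in $\cB$ and hence uncolored when the greedy pass reaches them. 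The only thing the randomness has to achieve is that each $v\in\cB$ with large $|N^+(v)|$ sees at least $\gamma d$ colors repeated in $N^+(v)$, which is precisely the quantity ($AT_v - Del_v$) the paper concentrates with Azuma and Talagrand. So while your instinct to cite Theorem 10.5 of Molloy--Reed is right about the concentration toolkit, the degeneracy setting forces a structural modification (one shot, tiny palette, color only $\cA$) that your proposal is missing, and without it the invariant you rely on does not hold.
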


\begin{proof}
We take $\gamma = \tfrac{\delta}{200}e^{-10 \epsilon^{-1}}$. Then define $d_1 = \ceil{\delta d / 4}$ and $s = \ceil{\epsilon d / 2}$ and $d_2 = \ceil{\gamma d}$. Assume that $d$ is sufficiently large in terms of $\epsilon,\delta,\gamma$.

Let $\cA$ be the vertices $v_1, \ldots, v_{n - d_1}$, and let $\cB$ be the vertices $v_{n - d_1 + 1}, \ldots, v_n$. The idea is to first color a few vertices of $\cA$ in such a way that all vertices in $\cB$ contain many pairs of neighbors that receive the same color. If we can do this, then a greedy coloring in the degeneracy order gives a proper $(d-d_2)$-coloring of $G$.

More precisely, we make the following claim. It is possible to find a partial proper coloring of $\cA$ with $s$ colors, such that for every $v \in \cB$, either $\abs{N^+(v)} \leq d - d_2 - 1$, or at least $d_2+1$ colors appear at least twice in $N^+(v)$.

We produce the desired partial coloring with the following randomized strategy. Color the vertices of $\cA$ with $s$ colors uniformly at random. For each pair of adjacent vertices that receive the same color, uncolor the one that appears later in the degeneracy order.

From now on fix $v\in\cB$ and assume that $|N^+(v)|>d-d_2-1$.

Let $X_v$ be the number of colors that are assigned to at least one pair of non-adjacent neighbors of $v$ and are retained by all neighbors of $v$ that are assigned that color. Let $X'_v$ be the number of colors that are assigned to exactly two neighbors of $v$ and that these two neighbors are non-adjacent and both retain this color. Clearly $X_v\geq X'_v$.

The number of pairs of non-adjacent vertices in $N^+(v)\cap\cA$ is at least
\[\binom{|N^+(v) \cap \cA|}{2}-e(N^+(v)) \geq \binom{|N^+(v)|}{2}-e(N^+(v))-d|\cB|\geq \binom{d-d_2}{2}-(1-\delta)\binom{d}{2}-dd_1\geq\frac{\delta}{5}\binom{d}{2}.\]
Let $u,w$ be one of these pairs of non-adjacent vertices in $N^+(v)\cap\cA$. For $u,w$ to be the only two vertices assigned a color $i$ in $N^+(v)$ and both to retain it, no vertex in $N^+(v) \setminus \{u,w\}$, $N^+(u)$, or $N^+(w)$ can be assigned color $i$. We can then bound \[\E[X_v']\geq s\cdot\frac{\delta}{5}\binom{d}{2}\cdot s^{-2}\cdot(1-s^{-1})^{3d}\geq 3d_2.\]

We write $X_v=AT_v-Del_v$ where $AT_v$ (for ``assigned twice'') is defined to be the number of colors that are assigned to at least one pair of non-adjacent neighbors of $v$ and where $Del_v$ is the number of colors that are assigned to at least one pair of non-adjacent neighbors of $v$ and then deleted from at least one of them. We will show that $AT_v$ and $Del_v$ both concentrate well around their means.

First note that $AT_v$ is a 2-Lipschitz function of the colors assigned to $N^+(v)$. Therefore by the Azuma--Hoeffding inequality
\[\Pr(|AT_v-\E[AT_v]|>t)<2e^{-t^2/8d}.\] Next note that $Del_v$ is a 2-Lipschitz function now of the colors assigned to the second neighborhood of $v$. However $Del_v$ is 3-certifiable in the sense that if $Del_v\geq r$ there are a set of $3r$ vertices whose colors certify the fact that $Del_v\geq r$. Thus by Talagrand's inequality (see, e.g., \cite[Page 81]{MR02})
\[\Pr(|Del_v-\E[Del_v]|>t)\leq 4e^{-(t-120\sqrt{3\E[Del_v]})^2/(96\E[Del_v])}\leq 4e^{-(t-120\sqrt{3d})^2/96d}.\]

Using the facts that $X_v\geq X'_v$ and $X_v=AT_v-Del_v$, we conclude that 
\[\Pr(X_v<d_2)\leq 2e^{-d_2^2/8d}+4e^{-(d_2-120\sqrt{3d})^2/96d}\leq 6e^{-\gamma^2d/100}\]
where the second inequality holds for $d$ sufficiently large in terms of $\gamma$.

Taking a union bound over all $d_1$ vertices $v\in\cB$, we see that for $d$ sufficiently large, with positive probability $X_v\geq d_2+1$ for all $v\in\cB$ with $|N^+(v)|>d-d_2-1$. 

We now claim that a greedy coloring of the uncolored vertices in the degeneracy ordering gives a $(d-d_2)$-coloring of $G$. Since $G[\cA]$ is $(1 - \epsilon) d$-degenerate, when we color $v\in\cA$, at most $(1-\epsilon)d$ colors will have been used on the forward neighborhood. Since the initial partial coloring uses $s$ colors, at most $s$ colors will be used on the backward neighborhood. Thus there will be at least $(d-d_2)-(1-\epsilon)d-s\geq 1$ colors available for $v$. When we color $v\in\cB$, the forward neighborhood of $v$ has size at most $d-d_2-1$ or has at least $d_2+1$ colors appear at least twice, and hence there is at least one color available for $v$. Therefore the greedy procedure produces a proper $(d-d_2)$-coloring of $G$, as desired.
\end{proof}

\section{Linear-sized graphs}
\label{sec:linear-size}

The goal of this section is to prove \cref{thm:main-triangle-ramsey} for graphs with $O(r)$ vertices. We start with the special case where the graph has a clique of almost maximal size.

\begin{lemma}
\label{prop:large-clique}
For every $C$ there exists $\beta>0$ such that the following holds. Let $G$ be an $r$-critical graph with $3r/2\leq |G|\leq Cr$ and $\omega(G)\geq(1-\beta)r$. Then $t(G)>\binom r3$.
\end{lemma}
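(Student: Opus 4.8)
The plan is to split on whether $G$ decomposes as a join. Suppose first that $G = C_1 + \dots + C_m$ with $m\ge 2$; since $G$ is $r$-critical each $C_i$ is $c_i$-critical with $\sum_i c_i = r$, so $|C_i|\ge c_i$, $e(C_i)\ge\binom{c_i}{2}$, and $t(C_i)+\tfrac12 e(C_i)\ge\binom{c_i}{3}+\tfrac12\binom{c_i}{2}$ by \cref{thm:basic-turan-bound}. Classifying a triangle of $G$ by whether its three vertices lie in one part $C_i$, or two in $C_i$ and one in $C_j$, or one in each of three parts, and substituting these bounds exactly as in the proof of \cref{cor:gallai-triangle} (now with $m$ parts), one gets $t(G)\ge\binom r3$, with the inequality strict unless every $C_i=K_{c_i}$, i.e. unless $G=K_r$. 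As $|G|\ge 3r/2>r$ we have $G\ne K_r$, so $t(G)>\binom r3$ in this case. Hence we may assume $G$ is join-prime, and then Gallai's theorem (\cref{thm:gallai}) forces $|G|\ge 2r-1$.

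Fix a maximum clique $Q$, write $\omega=|Q|=r-k$ with $1\le k\le\beta r$, and let $W=V(G)\setminus Q$, so that $|W|=|G|-\omega\ge r$ and $|W|\le Cr$. Classifying triangles of $G$ by the number of vertices in $W$ gives
\[ t(G)=\binom\omega3+\sum_{w\in W}\binom{d_Q(w)}{2}+T_2+t(G[W]),\qquad d_Q(w):=|N(w)\cap Q|\le\omega-1, \]
with $T_2\ge 0$ counting triangles having exactly two vertices in $W$. Assume for contradiction that $t(G)\le\binom r3$. Since $\binom r3-\binom\omega3=\sum_{i=\omega}^{r-1}\binom i2\le k\binom{r-1}{2}\le\tfrac12\beta r^3$, we obtain the two sparsity estimates $\sum_{w\in W}\binom{d_Q(w)}{2}\le\tfrac12\beta r^3$ and $t(G[W])\le\tfrac12\beta r^3$. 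In particular, for any fixed $\epsilon>0$ all but $O_\epsilon(\beta r)$ vertices $w\in W$ have $d_Q(w)<\epsilon r$, and $G[W]$ is triangle-sparse, so that after deleting the $o(r)$ vertices of $G[W]$ lying in too many triangles the remainder is locally sparse in the sense required below.

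From this I construct a proper $(r-1)$-colouring of $G$, contradicting $\chi(G)=r$. Colour $Q$ with colours $1,\dots,\omega$; it remains to colour $G[W]$ from the lists $L(w)=\{1,\dots,r-1\}\setminus\{i:q_i\in N(w)\}$, which satisfy $|L(w)|=(r-1)-d_Q(w)\ge k\ge 1$ and $|L(w)|\le\deg_{G[W]}(w)$ since $\deg_G(w)\ge r-1$. On the bulk of $W$ — vertices with $d_Q(w)<\epsilon r$ (hence $|L(w)|\ge(1-\epsilon)r$) and small local triangle count in $G[W]$ — one colours within a palette of fewer than $r-1$ colours using the locally-sparse colouring results \cref{thm:loc-sparse-list-coloring,thm:loc-sparse-vertex-triangle-coloring} and the degeneracy colouring lemma \cref{thm:degen-coloring} (applied along the degeneracy order inherited from $G$, with the triangle bounds above verifying its hypotheses); the $O_\epsilon(\beta r)$ exceptional vertices together with the $k-1$ surplus colours $\omega+1,\dots,r-1$ leave enough room to finish. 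This $(r-1)$-colours $G$, the desired contradiction, so $t(G)>\binom r3$.

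The crux is the colouring step. Because $|L(w)|=r-1-d_Q(w)$ can drop well below $\deg_{G[W]}(w)$ for vertices of large $G$-degree, plain greedy or degeneracy colouring is not enough and one must genuinely use the probabilistic locally-sparse colouring tools of \cref{sec:coloring}; arranging their hypotheses after the clean-up, and balancing the few "dense" deleted vertices against the $k-1$ spare colours, is the delicate part. The tightest regime is when $G[W]$ is edge-dense but almost triangle-free, which forces $\chi(G[W])$, and hence $k$, to be small, so the surplus $\binom r3-\binom\omega3$ one must beat is itself small.
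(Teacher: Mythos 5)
Your argument shares the skeleton of the paper's — anchor on a maximum clique $Q$, extract sparsity of $W = V(G)\setminus Q$ from $t(G)\le\binom r3$, and derive a contradictory proper $(r-1)$-colouring via Vu's locally-sparse list-colouring theorem (\cref{thm:loc-sparse-list-coloring}) — but there is a genuine gap in the colouring step, precisely where you flag ``the delicate part.'' After colouring $Q$ with $1,\dots,\omega$ and discarding the $O_\epsilon(\beta r)$ exceptional vertices of $W$ (large $d_Q$, or large local triangle count in $G[W]$), \cref{thm:loc-sparse-list-coloring} can indeed colour the bulk, but nothing in your sketch colours the exceptional vertices. Their lists $L(w)$ can be as small as $k=r-\omega$, and the ``$k-1$ surplus colours $\omega+1,\dots,r-1$'' you invoke are already contained in \emph{every} list $L(w)$ (they are exactly the colours unused on $Q$), so they offer no extra room — the bulk colouring may consume all of them. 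When $k=1$ there is no surplus at all, and nothing you've written prevents two adjacent exceptional vertices from both being forced onto the same singleton list. Ruling that out needs a separate structural argument from $Q$-maximality that you do not make, and collisions between exceptional vertices with somewhat larger lists and the bulk are harder still and entirely unaddressed.

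The paper sidesteps all of this by \emph{not} fixing a colouring of $Q$ at the outset. It deletes $Q$, the vertices $N$ with many $Q$-neighbours, and the locally dense vertices $D$ all at once; because $G$ is $r$-critical, the induced subgraph $G\setminus H$ on exactly these troublesome vertices is automatically $(r-1)$-colourable — criticality hands you a colouring of precisely the part you cannot control. Only then does it list-colour the remainder $H$, where every vertex has fewer than $7r/8$ neighbours outside $H$ and hence a list of size at least $r/8$, contradicting the Vu bound. The fix to your argument is to reorder in the same way: let criticality colour $Q\cup N\cup D$ for you, then list-colour what remains. Once you do, the join/Gallai preamble that upgrades $|G|\ge 3r/2$ to $|G|\ge 2r-1$ becomes unnecessary (the paper works directly from $|G|\ge 3r/2$), and the small slip $|W|=|G|-\omega\ge r$ (it should be $\ge r-1$) is moot.
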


\begin{proof}
Define $\beta=e^{-8CK}/384$ where $K$ is the constant in \cref{thm:loc-sparse-list-coloring}. Suppose for contradiction that $t(G)\leq \binom r3$.

Let $Q$ be a clique in $G$ with maximal size. Let $N$ be the set of vertices $v\in V(G)\setminus Q$ with $|N(v)\cap Q|\geq r/2$. Write $G'=G\setminus(Q\cup N)$, so $G'$ is the induced subgraph of $G$ by deleting the vertices that are in $Q$ or $N$. Let $D$ be the set of vertices $v\in V(G')$ such that $e(G'[N(v)])\geq 6\beta r^2$. Finally let $H=G'\setminus D$. Note that we can bound the number of triangles in $G$ as

\[t(G)\geq \binom{|Q|}3+|N|\binom{r/2}2+|D|\frac{6\beta r^2}{3}.\]
Since we assumed that $t(G)\leq \binom r3$ and $|Q|\geq(1-\beta)r$, this implies that $|N|\leq 5\beta r<r/8$ and $|D|<r/4$. 

Using $|G|\geq 3r/2$ and $|Q|\leq r$ we can see that $H$ is a graph with $|H|\geq |G|-|Q|-|N|-|D|>r/8$. Since $G$ is $r$-critical and $H$ has at least one vertex, this implies that $\chi(G\setminus H)<r$. Fix an $(r-1)$-coloring $\phi\colon G\setminus H\to[r-1]$. Note that each vertex $v\in H$ is adjacent to at most $r/2+|N|+|D|<7r/8$ vertices in $G\setminus H$. Since $G$ is not $(r-1)$-colorable, this implies that $H$ is not $r/8$-list colorable. Indeed, consider the list assignment where each vertex $v\in V(H)$ is assigned the list of colors in $[r-1]$ which do not appear on its neighbors in $\phi$. These lists all have size at least $r/8$, but if there were a list coloring for these lists, it would correspond to an extension of $\phi$ to a proper $(r-1)$-coloring of $G$ which does not exist.

We are in a position to apply \cref{thm:loc-sparse-list-coloring}. We have the bound $e(H[N(v)])<6\beta r^2$ for all $v\in H$ as well as $r/8\leq \Delta(H)\leq\Delta(G)$. (The first inequality follows since $\Delta(G)\geq\delta(G)\geq r-1$ while each vertex $v\in H$ is adjacent to fewer than $7r/8$ vertices in $G\setminus H$.) Thus we have
\[\frac r8<\chi_\ell(H)\leq K\frac{\Delta(H)}{\log(\Delta(H)^2/(6\beta r^2))}\leq \frac{K\Delta(G)}{\log(1/(384\beta))}= \frac{\Delta(G)}{8C}.\]
This is a contradiction by the trivial bound $|\Delta(G)|\leq|G|\leq Cr$.
\end{proof}

\begin{prop}
\label{thm:linear-graphs}
For all $C$ and $r$ sufficiently large (in terms of $C$), the following holds. Let $G$ be an $r$-critical graph with $2r-1\leq |G|\leq Cr$. Then $t(G)>\binom r3$.
\end{prop}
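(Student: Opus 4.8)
The plan is to argue by contradiction: suppose $G$ is $r$-critical with $2r-1\le |G|\le Cr$ but $t(G)\le\binom r3$. I will build a proper $(r-1)$-colouring of $G$, contradicting $r$-criticality. (Throughout we may assume $r$ is as large as we like in terms of $C$.)

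First I would dispose of the large-clique case. If $\omega(G)\ge(1-\beta)r$, where $\beta=\beta(C)$ is the constant supplied by \cref{prop:large-clique}, then since $3r/2\le 2r-1\le |G|\le Cr$ that lemma applies directly and gives $t(G)>\binom r3$, a contradiction. So from now on $\omega(G)<(1-\beta)r$, which by \cref{thm:turan} forces the neighbourhoods of $G$ to be Ramsey--Tur\'an sparse and brings the coloring machinery of \cref{sec:coloring} into play.

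Next I would isolate the dense part. Fix a small constant $\epsilon>0$, to be chosen at the very end in terms of $C$ and the absolute constants in \cref{thm:loc-sparse-list-coloring,thm:loc-sparse-vertex-triangle-coloring,thm:degen-coloring}, and set
\[
B=\set{\,v\in V(G):e(G[N(v)])\ge(1+\epsilon)\binom{r-1}{2}\,}.
\]
Since $\sum_v e(G[N(v)])=3t(G)\le 3\binom r3=r\binom{r-1}{2}$, we get $|B|\le r/(1+\epsilon)\le(1-\epsilon/2)r$. Moreover $\binom{d(v)}{2}\ge e(G[N(v)])$ shows each $v\in B$ has $d(v)\ge(1+\epsilon/3)r$ for large $r$, so $B$ consists of vertices of substantially more than the minimum degree $r-1$, while every $v\notin B$ has $e(G[N(v)])<(1+\epsilon)\binom{r-1}{2}$: its neighbourhood is ``not much denser than a clique on $r-1$ vertices.'' In particular the bulk of $G$ sits exactly in the medium-density regime for which \cref{thm:degen-coloring} is designed.

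The heart of the proof is then to colour $G$ with $r-1$ colours. One cannot afford to colour $G\setminus B$ and hand the vertices of $B$ fresh colours, since $|B|$ may be as large as $(1-\epsilon/2)r$; the vertices of $B$ must be absorbed into a palette of size $r-1$. The route I would take is to split $V(G)$ into the $O_C(1)$-many very-high-degree vertices, the set $B$, and the remaining ``bulk,'' order them in this way at the start of a degeneracy order of $G$ of degeneracy $d$ only slightly above $r$, and then: colour the bulk via \cref{thm:degen-coloring}, whose hypothesis (1) holds because the dense and high-degree vertices are placed first (so later vertices have small forward degree) and whose hypothesis (2), $e(N^+(v))\le(1-\delta)\binom d2$, follows from the density bound above (for bulk vertices from $e(G[N(v)])<(1+\epsilon)\binom{r-1}2$; for $v\in B$ because $|N^+(v)|\le|B|$ is small when $B$ comes early); colour off the very-high-degree vertices with $o(r)$ colours using Vu's list-colouring theorem (\cref{thm:loc-sparse-list-coloring}), their neighbourhoods having vanishing density since they contain at most $(1+\epsilon)\binom{r-1}2\ll d(v)^2$ edges; and mop up any residual globally sparse piece with Harris's theorem (\cref{thm:loc-sparse-vertex-triangle-coloring}), all feeding into the same $(r-1)$-colour palette. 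Assembled, this contradicts $\chi(G)=r$.

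I expect the main obstacle to be closing the colour arithmetic. The densest surviving neighbourhoods push the degeneracy parameter to $d\approx r\sqrt{(1+\epsilon)/(1-\delta)}>r$, whereas \cref{thm:degen-coloring} only buys the multiplicative saving $(1-\gamma)d$, and $\gamma=\gamma(\epsilon,\delta)$ is very small (exponentially so in $\epsilon^{-1}$). So one must balance $\epsilon$, the choice of $\delta$, and precisely how the degeneracy order is built, so that $(1-\gamma)d<r$ while simultaneously guaranteeing that routing $B$ and the high-degree vertices to the front spoils neither hypothesis (1) nor hypothesis (2) of \cref{thm:degen-coloring}. This constant-wrangling, together with the bookkeeping of the three-way split and of threading all parts into a single $(r-1)$-colouring (and the routine use of $|G|\ge 2r-1$ to ensure the pieces are nonempty where needed), is where essentially all the work lies; the earlier steps are comparatively soft.
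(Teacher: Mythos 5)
Your opening move (disposing of $\omega(G)\ge(1-\beta)r$ via \cref{prop:large-clique}) matches the paper, but the rest departs from it and, as sketched, has two gaps that the paper's machinery is specifically engineered to avoid.

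\textbf{No control on degrees.} You separate out a set $B$ of vertices with dense neighbourhoods and a set of ``very-high-degree'' vertices which you assert to be $O_C(1)$ in number. This is unjustified: an $r$-critical graph on at most $Cr$ vertices has $e(G)\le\binom{Cr}{2}$, so the number of vertices of degree, say, $\ge 2r$ is only bounded by $O(C^2 r)$, and a vertex can have large degree \emph{and} a sparse neighbourhood (so it escapes $B$). Consequently your ``bulk'' can contain $\Theta(r)$ vertices of degree far above $r$, the degeneracy of the residual graph need not be ``only slightly above $r$,'' and the $(1-\gamma)d$ budget from \cref{thm:degen-coloring} blows past $r$. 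You cannot pay for these vertices with Vu's theorem either, since there are too many of them. The paper controls this by running the sequential reduction $G\supseteq G_0'\supseteq G_1'\supseteq\cdots$ of critical subgraphs and introducing the \emph{excess} $\ex(v)= e(G_i'[N(v)])+\tfrac12 d_{G_i'}(v)-\tfrac{(r-i-1)^2}{2k_i}$, which by Tur\'an is at least $\tfrac{d_{G_i'}(v)^2-(r-i-1)^2}{2k_i}$: the total excess is $O(C^2 r^2)$, so only $O(C^2 r^{1/2})$ vertices can have excess above $r^{3/2}$, and every surviving vertex has degree within $r^{3/4}$ of $r-i-1$. That is precisely the degree control your outline is missing.

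\textbf{The density threshold for $B$ is too weak for the colour arithmetic to close.} After removing $B$, your bulk vertices still satisfy only $e(N(v))\le(1+\epsilon)\binom{r-1}{2}$, which is \emph{denser} than a clique on $\approx r$ vertices. To make hypothesis (2) of \cref{thm:degen-coloring} hold you must inflate $d$ to roughly $r\sqrt{(1+\epsilon)/(1-\delta)}$, but then $(1-\gamma)d>r$ because $\gamma=\tfrac{\delta}{200}e^{-3\epsilon'^{-1}}$ is exponentially small in the other parameters — you flag this balancing problem yourself, but I do not see how it can be resolved with your definition of $B$. The paper sidesteps it entirely: in the critical reduction either some layer becomes a clique early (then $\omega(G)\ge(1-\beta)r$ and \cref{prop:large-clique} finishes), or every relevant layer has $k_i=\alpha(G_i')\ge 2$, and then the excess bound gives $e(H[N^+(v)])\le r^{3/2}+\tfrac{(r-\ell-1)^2}{4}\approx\tfrac12\binom{d}{2}$, i.e.\ one can take $\delta\approx\tfrac12$, not $\delta\to 0$. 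Getting this factor-of-two gap (via the Tur\'an subtraction together with the $k_i\ge 2$ dichotomy) is the crux, and it is absent from your proposal.
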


\begin{proof}
Define $\beta$ to be the parameter produced in \cref{prop:large-clique} applied with $C$. Let $\delta=\beta/4$. Let $r$ be sufficiently large in terms of all of these parameters.

Suppose for the sake of contradiction that $t(G)\leq\binom r3$.

Let $G_0=G$ which satisfies $\chi(G_0)=r$. Given a graph $G_i$ satisfying $\chi(G_i)=r-i$, let $G_i'$ be an $(r-i)$-critical subgraph of $G_i$. Clearly $\chi(G_i')=r-i$. Then let $I_i$ be an independent set in $G_i'$ with maximal size. Write $k_i=|I_i|=\alpha(G_i')$ and let $G_{i+1}=G_i'\setminus I_i$. It follows that $\chi(G_{i+1})=r-i-1$. To see this, note $G_i'$ is $(r-i)$-critical and $G_{i+1}$ is a proper subgraph, so $\chi(G_{i+1})<r-i$. Furthermore, since $I_i$ is an independent set, we see that $\chi(G_i')\leq 1+\chi(G_{i+1})$ implying the desired fact. Furthermore we claim that $\chi(G_i\setminus G_j)\geq j-i$. This follows since $r-i=\chi(G_i)\leq \chi(G_j)+\chi(G_i\setminus G_j)=r-j+\chi(G_i\setminus G_j)$.

Since $G_i'$ is $(r-i)$-critical, we have the bound $\delta(G_i')\geq r-i-1$. Furthermore, we can bound the number of edges and triangles in $G$ as
\begin{equation}
\label{eq:edge-bound}
e(G)\geq\sum_{i=0}^{r-1} \sum_{v\in I_i}d_{G_i'}(v)+\frac12\sum_{i=0}^{r-1}\sum_{v\in G_i\setminus G_i'}d_{G_i}(v)
\end{equation}
and
\begin{equation}
\label{eq:tri-bound}
t(G)\geq\sum_{i=0}^{r-1} \sum_{v\in I_i}e(G_i'[N(v)])+\frac13\sum_{i=0}^{r-1}\sum_{v\in G_i\setminus G_i'}e(G_i[N(v)]).
\end{equation}

For a vertex $v\in G$, define the \emph{excess} of $v$, denoted $\ex(v)$ as follows. If $v\in I_i$ for some $i$,
\[\ex(v)=e(G_i'[N(v)])+\frac{d_{G_i'}(v)}2-\frac{(r-i-1)^2}{2k_i},\]
and if $v\in G_i\setminus G_i'$ for some $i$,
\[\ex(v)=\frac13 e(G_i[N(v)])+\frac14{d_{G_i}(v)}.\]

We combine the assumptions that $t(G)\leq\binom r3$ and $e(G)\leq\binom{Cr}2$ with \eqref{eq:edge-bound} and \eqref{eq:tri-bound} to obtain
\begin{align*}
C^2r^2
&\geq t(G)+\frac12 e(G) - \sum_{i=0}^{r-1} \frac{(r-i-1)^2}2\\
&\geq\sum_{i=0}^{r-1} \sum_{v\in I_i}\left(e(G_i'[N(v)])+\frac12d_{G_i'}(v)-\frac{(r-i-1)^2}{2k_i}\right) \\&\qquad\qquad\qquad\quad+\sum_{i=0}^{r-1}\sum_{v\in G_i\setminus G_i'}\left(\frac13e(G_i[N(v)])+\frac14d_{G_i}(v)\right)\\
&= \sum_{v\in G}\ex(v).
\end{align*}

Since $\alpha(G_i')=k_i$ and $\alpha(G_i)\leq k_{i-1}$, by Tur\'an's theorem (\cref{thm:turan}) we have the following bounds. For $v\in I_i$ for some $i$,
\begin{equation}
\label{eq:ex-indep}
\ex(v)=e(G_i'[N(v)])+\frac{d_{G_i'}(v)}2-\frac{(r-i-1)^2}{2k_i}\geq \frac{d_{G_i'}(v)^2-(r-i-1)^2}{2k_i},
\end{equation}
and for $v\in G_i\setminus G_i'$ for some $i$,
\begin{equation}
\label{eq:ex-other}
\ex(v)=\frac13 e(G_i[N(v)])+\frac14{d_{G_i}(v)}\geq  \frac{d_{G_i}(v)^2}{6k_{i-1}}.
\end{equation}
Since $d_{G_i'}(v)\geq r-i-1$ for all $i$ and $v\in I_i$, each vertex has non-negative excess.

Note that $k_1\geq k_2\geq\cdots\geq k_r>0$ satisfy $\sum k_i=|G|\leq Cr$. Pick the smallest $\ell$ such that $k_{\ell-1}\leq C\delta^{-1}$. By the previous inequality we see that $\ell\leq 1+\delta r$.

Define $H$ to be the induced subgraph of $G_\ell$ on vertices with excess at most $r^{3/2}$. Since each vertex has non-negative excess and the total excess is at most $C^2 r^2$, we see that the number of vertices in $G_{\ell} \backslash H$ is at most $C^2 r^{1/2}$. Then from $\chi(G_\ell)=r-\ell$, we conclude that $\chi(H)\geq r - \ell - C^2 r^{1/2}$. Furthermore, for each vertex $v$ in $H$ which lies in $I_i$ for some $i$, by \eqref{eq:ex-indep} we have 
\[d_{G_i'}(v)^2 \leq (r-i-1)^2 + 2r^{3/2} k_i\]
so
\[d_{G_i'}(v) \leq \sqrt{(r-i-1)^2 + 2r^{3/2} k_i}\leq \max(r - i - 1, r / 2) + 2k_i r^{1/2} \leq \max(r - i - 1, r / 2) + r^{3/4},\]
where the last inequality holds for $r$ sufficiently large in terms of $C,\delta$.

Similarly each vertex $v$ in $H$ which lies in $G_i\setminus G_i'$ for some $i$, by \eqref{eq:ex-other} we have
\[d_{G_i}(v) \leq \sqrt{6k_{i-1}r^{3/2}} \leq r^{5/6},\]
where the last inequality holds for $r$ sufficiently large in terms of $C,\delta$.

In particular, as $d_{G_i}(v) \geq d_{G_{i - 1}'}(v) - k_{i - 1} \geq r - i - k_{i - 1}$, no such $v$ lies in $G_i\setminus G_i'$ for $\ell \leq i \leq r / 2$ (again assuming that $r$ is sufficiently large).

Now take the degeneracy order on $H$ to be any total ordering which refines the partial order $\cdots\succ G_i\setminus G_i'\succ I_i\succ G_{i+1}\setminus G_{i+1}'\succ\cdots $. Set $d = r - \ell - 1 + r^{3/4}$. The results above show that $\abs{N^+(v)} \leq d$ for any $v \in H$. Furthermore, for any $i \geq \ell + C^{-1} \delta^2 d / 4$ and any $v\in I_i\cap H$ we have 
\[\abs{N^+(v)} \leq \max(r - i - 1, r / 2) + r^{3/4} \leq (1 - C^{-1}\delta^2 / 8) d\]
and for any $v\in (G_i\setminus G_i')\cap H$ we have
\[\abs{N^+(v)} \leq r^{5/6} \leq (1 - C^{-1}\delta^2 / 8) d.\]
Furthermore, for any $i$ satisfying $\ell\leq i \leq \ell + C^{-1} \delta^2 d / 4$, we have $\abs{I_i} \leq C \delta^{-1}$ and $(G_i \setminus G_{i}') \cap H = \emptyset$. Thus all but the last $\delta d / 4$ vertices in the degeneracy order satisfy $\abs{N^+(v)} \leq (1 - C^{-1}\delta^2 / 8) d$.

We are almost in a position to apply \cref{thm:degen-coloring} to $H$. We need to control $e(H[N^+(v)])$ which we can do for $v\in I_i$ with $k_i\geq 2$.

Define $b$ to be the smallest integer such that $k_b=1$. Note that this implies that $\alpha(G'_b)=1$, so $G'_b=K_{r-b}$. We have two cases. First, if $b\leq \beta r$ then $G$ is an $r$-critical graph which contains $G'_b=K_{r-b}$. By \cref{prop:large-clique} we see that $t(G)> \binom r3$, a contradiction.

Thus from now on we can assume that $b> \beta r$. Under this assumption, we claim $e(H[N^+(v)])\leq (1 - \delta) \binom d2$ for each $v\in H$. To see this, note that for $v\in I_i$ with $\ell\leq i<b$, we have $k_i\geq 2$ so by the definition of excess
\begin{align*}
e(H[N^+(v)])
&\leq e(G_i'[N(v)])\\
&\leq \ex(v)+\frac{(r-i-1)^2}{2k_i}-\frac{d_{G_i'}(v)}2\\
&\leq r^{3/2} + \frac{(r - \ell -1)^2}{4}\\
&\leq (1 - \delta) \binom d2.
\end{align*}
The last inequality holds for $r$ sufficiently large in terms of $\delta$. For $v\in I_i$ with $i\geq b$, we have $|G_i'|\leq r-b\leq(1-\beta)r$. Furthermore $d \geq (1-\delta)r$ for $r$ sufficiently large. Since we chose $\delta=\beta/4$ we see that
\[e(H[N^+(v)])\leq \binom{(1-\beta)r}2\leq (1 - \delta) \binom d2\]
holds for sufficiently large $r$.
Finally, for $v\in G_i\setminus G_i'$ by the definition of excess
\begin{align*}
e(H[N^+(v)])
&\leq e(G_i[N(v)])\\
&= 3\ex(v)-\frac{3d_{G_i}(v)}4\\
&\leq 3r^{3/2} \leq (1 - \delta) \binom d2.
\end{align*}
Thus, \cref{thm:degen-coloring} implies that $\chi(H) \leq (1 - \gamma) d$ for some constant $\gamma$ depending on $\delta, C^{-1}\delta^2 / 8$ only. Since we showed that $\chi(H) \geq r - \ell - C^2 r^{1/2}$ and we chose $d = r - \ell - 1 + r^{3/4}$, this is a contradiction for $r$ sufficiently large.
\end{proof}

\section{Proof of the main theorem}
\label{sec:main-proof}

We now prove \cref{thm:main-chromatic-weak} for graphs of all sizes. We start in the case when the graph has small clique number (less than $\epsilon r$ for some small absolute constant $\epsilon>0$). Here we get a gain over the basic Tur\'an argument by using the Ramsey--Tur\'an theorem. Then we show how to get a gain when the graph has large clique number, proving the main theorem.

\begin{lemma}
\label{lem:bounded-indep}
For all $\epsilon>0$ there exists $C,c>0$ such that the following holds. Let $G$ be an $r$-chromatic graph with $|G|\leq cr^2\log r$ vertices and $t(G)\leq \binom r3$. Then $G$ has a subgraph $G'$ with $\chi(G')\geq(1-\epsilon)r$ and $\alpha(G')\leq C$.
\end{lemma}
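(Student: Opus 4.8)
The plan is to iterate a single ``peeling'' step. Since $G$ is $r$-chromatic, it has an $r$-critical subgraph, and in particular $\delta \ge r-1$ on this subgraph; I will work with an $r$-critical subgraph $G_0$ throughout. Repeatedly: take the current $(r-i)$-critical graph $G_i'$, let $I_i$ be a maximum independent set, set $\alpha_i = \alpha(G_i') = |I_i|$, and pass to $G_{i+1} = G_i' \setminus I_i$ (followed by taking an $(r-i-1)$-critical subgraph), exactly as in the proofs of \cref{thm:basic-turan-bound} and \cref{thm:linear-graphs}. The key point is a Ramsey--Tur\'an input: if $\alpha(G_i')$ is large, say $\alpha(G_i') \ge \eta (r-i)$, then because $G_i'$ has $\ge \binom{r-i}{2}$ edges but, by Ramsey--Tur\'an (applied with the triangle, i.e.\ the fact $\mathrm{RT}(n, K_3, o(n)) = o(n^2)$, or more precisely $\mathrm{ex}(n,K_3,o(n)) = o(n^2)$ edges force many triangles once $\alpha = o(n)$)—wait, the relevant direction is the contrapositive. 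Let me restate: I want to show that if $\alpha(G_i')$ is small relative to $|G_i'|$, the graph already has $\gg (r-i)^3$ triangles, which combined with $t(G) \le \binom r3$ caps how often this can happen; conversely, when $\alpha(G_i')$ is comparable to $|G_i'|$, one removes a constant fraction of the vertices at each step, so the vertex count cannot stay as large as $cr^2\log r$ for long.

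More concretely, here is the dichotomy I would set up at step $i$ (for $i \le \epsilon r / 2$, say). If $|G_i'| \ge \frac12 \epsilon r \cdot (\text{something})$... actually the cleaner bookkeeping: suppose toward a contradiction that no subgraph $G'$ with $\chi(G') \ge (1-\epsilon)r$ has $\alpha(G') \le C$. Then for every $i \le \epsilon r$, $\alpha(G_i') > C$. I claim that in fact $\alpha(G_i')$ must be fairly large — at least $\rho |G_i'|$ for a constant $\rho = \rho(\epsilon)$ — for most such $i$; otherwise the Ramsey--Tur\'an theorem (in the form: an $n$-vertex graph with $\alpha = o(n)$ and $\ge (\frac14 + o(1))n^2$ edges — here $\ge \binom{r-i}{2}$ edges with $r - i \ge (1-\epsilon)r$ and $|G_i'|$ possibly much larger, so edge density is at least $\sim \binom{r-i}{2}/\binom{|G_i'|}{2}$) forces $\omega(G_i')$ large or many triangles. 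The cleanest route: use the Ramsey--Tur\'an bound $\mathrm{ex}(n; K_3, \alpha) $ type statement to say that if $\alpha(G_i') \le n/\log n$-ish and $G_i'$ has a near-complete-graph's worth of edges on a linear-in-$r$ chromatic core, then $t(G_i') \gg r^3$, contradicting $t(G) \le \binom r3$ — but for this the clique/independence interplay needs care.

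The main obstacle, and where I expect to spend the most effort, is making the Ramsey--Tur\'an step quantitatively compatible with the hypothesis $|G| \le c r^2 \log r$: one needs that having small independence number on a piece that is $(1-\epsilon)r$-chromatic but possibly on $\Theta(r^2 \log r)$ vertices still produces more than $\binom r3$ triangles, and the extremal triangle-free-ish graphs with independence number $\Theta(n/\log n)$ (shift graphs / Kneser-type constructions) have exactly $\Theta(r^2 \log r)$-ish vertex counts when $\chi = r$ — so the constant $c$ must be chosen below the relevant Ramsey--Tur\'an threshold, and $C$ correspondingly large. Concretely I would: (1) fix $\epsilon$, choose $\rho$ from the Ramsey--Tur\'an density statement, set $C$ large and $c$ small in terms of these; (2) run the peeling for $i = 0, 1, \dots$; (3) at each step, either $\alpha(G_i') \le C$ already (then we are done with $G' = G_i'$ provided $i \le \epsilon r$, since $\chi(G_i') = r - i \ge (1-\epsilon)r$), or $\alpha(G_i') > C$, in which case either $\alpha(G_i') \ge \rho|G_i'|$ — so $|G_{i+1}| \le (1-\rho)|G_i'| \le (1-\rho)|G_i|$, meaning this cannot happen more than $O_\rho(\log(r^2\log r)) = O(\log r)$ times before $|G_i'|$ drops below $2r - 1$, at which point $\alpha \le 2$ by \cref{cor:gallai-triangle}'s regime — or $\alpha(G_i') < \rho |G_i'|$, which via Ramsey--Tur\'an forces $t(G_i') > \binom r3$, the final contradiction. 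Balancing these three cases so the total number of peeling steps stays below $\epsilon r$ while the vertex count is driven below the Gallai threshold is the delicate accounting; the inequality $t(G) \le \binom r3$ is used both to bound how many ``small $\alpha$'' steps can occur and, through \eqref{eq:tri-bound}-style summation, to control the accumulated triangle count.
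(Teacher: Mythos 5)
Your proposal diverges substantially from the paper's proof, and I believe it has a genuine gap that cannot be repaired along the lines you sketch. The paper's argument is much more direct: it splits $G$ into the set $A$ of vertices lying in fewer than $r^2/C'$ triangles and the complement $B$; the hypothesis $t(G)\le\binom r3$ forces $|B|\le C'r$, and \cref{thm:loc-sparse-vertex-triangle-coloring} (Harris) shows $\chi(A)\le\epsilon r/2$ because $|A|\le cr^2\log r$, $t(A)\le\binom r3$, and each $v\in A$ lies in at most $r^2/C'$ triangles. Then $\chi(B)\ge(1-\epsilon/2)r$, and since $|B|=O(r)$, peeling $\epsilon r/2$ maximum independent sets from $B$ leaves a subgraph $G'$ with $\chi(G')\ge(1-\epsilon)r$ whose independence number is at most $|B|/(\epsilon r/2)=O(1)$. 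The locally sparse coloring input (Harris) is the key step, and you do not invoke anything of this kind.

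The central problem with your route is density. You want to run the peeling on all of $G$ and invoke a Ramsey--Tur\'an theorem whenever $\alpha(G_i')$ is ``small relative to $|G_i'|$.'' But the only Ramsey--Tur\'an input available (the Erd\H{o}s--S\'os result, \cref{prop:ramsey-turan}) concerns graphs with bounded independence number, where it produces a $\Theta(|G_i'|^2)$ edge count. Here the vertex set can be as large as $\Theta(r^2\log r)$ while the graph need only have $\Theta(r^2)$ edges (coming from $\chi(G_i')\approx r$); the edge density is $\Theta(1/(r\log r)^2)$, far below any regime in which a $K_3$--Ramsey--Tur\'an statement gives information. Your proposed intermediate regime ``$C<\alpha(G_i')<\rho|G_i'|$,'' or ``$\alpha\le n/\log n$-ish,'' is well outside the scope of the cited theorem, and no contradiction with $t(G)\le\binom r3$ arises. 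In \cref{thm:small-clique}, Ramsey--Tur\'an \emph{is} applied, but to the \emph{neighborhoods} $G_i'[N(v)]$ and, crucially, only after $\alpha(G_i')\le C$ has already been secured by this very lemma; the graph there also lives on $O(r^2)$ vertices, not $O(r^2\log r)$. Using Ramsey--Tur\'an to prove the bounded-$\alpha$ statement that Ramsey--Tur\'an needs as input is circular.

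Your endgame is also flawed in a secondary way: when the exponential decay drives $|G_j|$ below $2(r-j)-2$, \cref{cor:gallai-triangle} gives $t(G_j)\ge\binom{r-j}{3}$, not $\alpha(G_j)\le 2$. An $(r-j)$-chromatic graph on fewer than $2(r-j)$ vertices can still have independence number linear in $r$ (take a clique joined to an independent set), so you would not be done at that point; additional peeling to drive $\alpha$ down to a constant costs further colors, which is exactly the accounting that the paper finesses by restricting the peeling to the linear-sized set $B$ rather than to all of $G$. The missing ingredient is a locally sparse coloring theorem; without it the stated bound $|G|\le cr^2\log r$ combined with $t(G)\le\binom r3$ cannot be converted into a small-independence-number conclusion by Tur\'an or Ramsey--Tur\'an methods alone.
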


\begin{proof}
Take $C'= e^{1000\epsilon^{-2} K^2}$ where $K$ is the constant in \cref{thm:loc-sparse-vertex-triangle-coloring}. Take $c=\epsilon^2/(100 K^2)$. Let $C=2C'\epsilon^{-1}$.

Define the partition $G=A\sqcup B$ where $A$ is the set of vertices contained in fewer than $r^2/C'$ triangles. Clearly $|B|\leq C'r$. Write $t(A)=a r^3$ for some $a\in[0,1]$. By \cref{thm:loc-sparse-vertex-triangle-coloring} we have
\begin{align*}
r
&\leq\chi(G)\leq\chi(A)+\chi(B)\\
&\leq\chi(B)+K\left(\sqrt{\frac{|A|}{\log|A|}}+\frac{t(A)^{1/3}\lg\lg(t(A)^2/(r^2/C')^3)}{\lg^{2/3}(t(A)^2/(r^2/C')^3)}\right)\\
&\leq\chi(B)+K\left(\sqrt{\frac{cr^2\log r}{\log(cr^2\log r)}}+\frac{a^{1/3}r\lg\lg(a^2C'^3)}{\lg^{2/3}(a^2C'^3)}\right)\\
&\leq \chi(B)+\epsilon r/2.
\end{align*}
To see the last inequality, first note that our choice of $c$ makes the first term smaller than $\epsilon r/4$. The second term is upper-bounded by $Ka^{1/3}r$, which is sufficiently small if $a<(\epsilon/4K)^3$. In the range $(\epsilon/4K)^3\leq a\leq 1$, the function agrees with the non-truncated version which is increasing, so in this range
\[K\frac{a^{1/3}r\lg\lg(a^2C'^3)}{\lg^{2/3}(a^2C'^3)}= K\frac{a^{1/3}r\log\log(a^2C'^3)}{\log^{2/3}(a^2C'^3)}\leq K\frac{r\log\log(C'^3)}{\log^{2/3}(C'^3)}\leq \epsilon r/4.\] The last inequality holds for our choice of $C'$.

Since $\chi(B)\leq r$, we can construct a partition $B=I_1\sqcup\cdots\sqcup I_r$ into independent sets as follows. Define $I_j$ to be an independent set in $G\setminus(I_1\sqcup\cdots\sqcup I_{j-1})$ with maximal size. Say that $|I_j|=k_j$ so $k_1\geq k_2\geq\cdots\geq k_r\geq 0$ and $\sum k_i=|B|\leq C'r$. Therefore we see that $k_{\epsilon r/2}\leq 2C'\epsilon^{-1}$. Let $G'$ be the union of $I_j$ for $j\geq\epsilon r/2+1$. Then by construction $\alpha(G')=k_j\leq 2C'\epsilon^{-1}=C$. Furthermore, by construction $\chi(G)\leq \chi(G')+\epsilon r$, so $\chi(G')\geq(1-\epsilon)r$.
\end{proof}

We use the following Ramsey--Tur\'an type result.

\begin{thm}[{Erd\H{o}s--S\'os \cite{ES70}}]
\label{prop:ramsey-turan}
For all $\delta>0$ and even $k$ there exists $\epsilon>0$ such that the following holds. Suppose $\alpha(G)\leq k$ and $\omega(G)\leq \epsilon \abs{G}$. Then $e(G)\geq \left(\tfrac1k-\delta\right)|G|^2-\tfrac12|G|$.
\end{thm}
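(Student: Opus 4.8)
The plan is to deduce the statement from its complementary reformulation, which is a Ramsey--Tur\'an upper bound, and to prove that bound using Szemer\'edi's regularity lemma. Write $n=|G|$ and $l=k/2$ (this is where the evenness of $k$ enters). We may assume $n$ is large: when $\delta\ge 1/(2k)$ the bound already follows from Tur\'an's theorem (\cref{thm:turan}), and when $n$ is bounded the hypotheses $\alpha(G)\le k$ and $\omega(G)\le\epsilon n$ become incompatible once $\epsilon$ is small (Ramsey's theorem bounds $n$ by a fixed power of $1/\epsilon$), so the statement is vacuous there. Passing to $H=\overline G$, the hypotheses translate to: $H$ is $K_{k+1}$-free (since $\omega(\overline G)=\alpha(G)$) and $\alpha(H)\le\epsilon n$ (since $\alpha(\overline G)=\omega(G)$); and since $e(G)=\binom n2-e(H)$, the conclusion becomes $e(H)\le\bigl(1-\tfrac2k\bigr)\tfrac{n^2}{2}+o(n^2)$, i.e. the Ramsey--Tur\'an estimate $\mathrm{RT}(n,K_{2l+1},o(n))\le\bigl(1-\tfrac1l\bigr)\tfrac{n^2}{2}+o(n^2)$. (This is sharp: take $l$ vertex classes of size $n/l$, joined completely, with a triangle-free graph of sublinear independence number inside each class; the complement of this graph meets the theorem with equality.)

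To prove the upper bound on $e(H)$, I would first fix a density threshold $\eta>0$ and a regularity parameter $\epsilon'>0$ depending only on $\delta$ and $l$, apply the regularity lemma to $H$ to obtain an equitable partition into $M$ parts with $M$ bounded in terms of $\epsilon'$, and form the reduced graph $R$ on the parts, joining two parts when the pair between them is $\epsilon'$-regular of density at least $\eta$. Crucially, only after $M$ is fixed do I choose $\epsilon$ small enough that $\epsilon n\ll\epsilon'\,(n/M)$; this guarantees that \emph{every} subset of a part of size at least $\epsilon'(n/M)$ spans an edge of $H$, because $\alpha(H)\le\epsilon n$. The key point is then the embedding claim: a clique on $s$ parts in $R$ yields a copy of $K_{2s}$ in $H$. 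One proves this by the standard greedy embedding for regular pairs: process the $s$ parts in order, and at each step, inside the current part's candidate set (still a fixed positive proportion of that part), discard the few vertices lacking a proportional-sized common neighborhood in every remaining part's candidate set, pick an edge of $H$ among the survivors (possible since that set has size $\Theta(n)\gg\alpha(H)$), and replace each remaining candidate set by its intersection with the common neighborhood of the edge's two endpoints; after $s$ steps the $2s$ chosen vertices span a $K_{2s}$. Since $H$ is $K_{2l+1}$-free, this forces $R$ to be $K_{l+1}$-free (a $K_{l+1}$ in $R$ would give a $K_{2l+2}\supseteq K_{2l+1}$ in $H$), whence $e(R)\le\bigl(1-\tfrac1l\bigr)\binom M2$ by Tur\'an. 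Transferring back, each heavy pair contributes at most $(n/M)^2$ edges of $H$, while edges inside parts, across irregular pairs, and across low-density pairs together contribute at most $\bigl(1/M+\epsilon'+\eta\bigr)n^2$; choosing $M$ large and $\eta,\epsilon'$ small makes the total error at most $\delta n^2$, giving $e(H)\le\bigl(1-\tfrac1l\bigr)\tfrac{n^2}{2}+\delta n^2$ and hence the theorem.

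The main obstacle is the embedding claim together with the delicate order of quantifiers around it: the independence hypothesis available is only $\alpha(H)\le\epsilon n$ for a fixed $\epsilon$ rather than a genuine $o(n)$, so one must freeze the regularity data (and hence the part size $\sim n/M$) before shrinking $\epsilon$, so that a proportional-sized subset of each part always spans an edge. The evenness of $k$ is used exactly at the step ``$K_{2l+1}$-free $\Rightarrow$ $R$ is $K_{l+1}$-free'': it is what makes $l=k/2$ an integer and produces the sharp constant $1/k$ after applying Tur\'an to $R$. Everything else --- the regularity lemma itself, the counting estimates inside the embedding, and the accounting of error edges --- is routine.
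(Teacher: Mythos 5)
The paper does not prove this theorem; it only cites it to Erd\H{o}s--S\'os \cite{ES70}, whose 1970 argument is elementary and predates the regularity lemma. So your approach is a genuinely different route from the cited source, and the outer framework (pass to $H=\overline{G}$, apply the regularity lemma, show the reduced graph $R$ is $K_{l+1}$-free, finish with Tur\'an on $R$ plus a standard error budget) is the standard ``modern'' scaffold for odd Ramsey--Tur\'an; the quantifier order you describe (freeze $M,\epsilon',\eta$ before shrinking $\epsilon$) is also right. But the embedding claim, which you correctly single out as the crux, has a real gap.

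The step ``discard the few vertices lacking a proportional-sized neighborhood in each remaining candidate set, pick an edge $\{u,u'\}$ among the survivors, and replace each $W_j$ by $W_j\cap N(u)\cap N(u')$'' does not keep the candidate sets proportionally large. Knowing $|N(u)\cap W_j|\ge(\eta-\epsilon')|W_j|$ and $|N(u')\cap W_j|\ge(\eta-\epsilon')|W_j|$ gives no lower bound on the intersection $|N(u)\cap N(u')\cap W_j|$; it can be $0$ whenever $\eta<1/2$, which is forced here since $\eta$ must be taken small for the error terms to be $o(n^2)$. Regularity does control the number of \emph{pairs} $\{u,u'\}\subseteq V_i$ with small common neighborhood in a fixed $W_j$ --- at most $\epsilon'|V_i|^2$ of them --- but the number of \emph{edges} of $H$ inside $V_i$ is only guaranteed (by Tur\'an) to be of order $|V_i|^2/\alpha(H)$, and $\alpha(H)$ may be as large as $\epsilon n$, so this edge count can be $\Theta(|V_i|/(\epsilon M))$, which is $o(\epsilon'|V_i|^2)$ once $n$ is large. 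Hence every edge of $H[V_i]$ could be one of the ``bad'' pairs, and the greedy gets stuck after the first step. Trying to fix it by picking $u$ first, restricting the $W_j$, and then re-discarding before choosing $u'\in N(u)$ runs into the same obstruction: one only gets that $H[W_i^{(1)}]$ has maximum degree $O(s\epsilon'|V_i|)$, hence $\alpha(H[W_i^{(1)}])\gtrsim 1/(s\epsilon')$, a constant, which does not contradict $\alpha(H)\le\epsilon n$ for large $n$. The published proofs of this two-per-part embedding (Erd\H{o}s--Hajnal--S\'os--Szemer\'edi and the later Ramsey--Tur\'an regularity literature) use a more careful argument precisely to circumvent this; as written your step \emph{would fail}, and this is the genuine obstacle rather than the bookkeeping you flag.
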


\begin{prop}
\label{thm:small-clique}
There exists $\epsilon>0$ such that the following holds. For sufficiently large $r$, let $G$ be an $r$-chromatic graph with $e(G)\leq r^3/1000$. If $\omega(G)\leq \epsilon r$ then $t(G)\geq\binom r3$. 
\end{prop}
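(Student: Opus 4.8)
The plan is to run the chain construction from the proof of \cref{thm:basic-turan-bound} inside a carefully chosen subgraph of $G$, replacing Tur\'an's theorem by the Ramsey--Tur\'an theorem (\cref{prop:ramsey-turan}) at each step; the point is that in a graph with bounded independence number and sublinear clique number the Ramsey--Tur\'an bound has leading coefficient $\tfrac1k$ rather than Tur\'an's $\tfrac1{2k}$, an essentially twofold gain. Fix constants in this order: a small absolute $\epsilon_1$ (say $\epsilon_1=\tfrac1{100}$); let $C_1,c_1$ be the constants $C,c$ returned by \cref{lem:bounded-indep} with parameter $\epsilon_1$ (enlarging $C_1$ so it is a valid upper bound); choose $\delta_2>0$ with $C_1\delta_2\le\tfrac1{100}$; let $\epsilon_2\in(0,1)$ be the smallest of the constants produced by \cref{prop:ramsey-turan} over the finitely many even $k$ with $2\le k\le C_1+1$ and $\delta=\delta_2$; and set $\epsilon=\epsilon_2/100$. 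Now assume for contradiction that $t(G)<\binom r3$. It suffices to treat the case that $G$ is $r$-critical, since an $r$-critical subgraph $H\subseteq G$ has $e(H)\le e(G)$, $\omega(H)\le\omega(G)$, $\chi(H)=r$ and $t(H)\le t(G)<\binom r3$. Then $\delta(G)\ge r-1$, so $|G|\le 2e(G)/(r-1)\le r^2/400\le c_1 r^2\log r$ for $r$ large, and \cref{lem:bounded-indep} gives a subgraph $G_*\subseteq G$ with $r':=\chi(G_*)\ge(1-\epsilon_1)r$ and $\alpha(G_*)\le C_1$; we still have $\omega(G_*)\le\epsilon r$, $e(G_*)\le r^3/1000$ and $t(G_*)\le t(G)<\binom r3$.

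Next I build the chain of induced subgraphs $G_*=H_0\supseteq H_0'\supseteq H_1\supseteq\cdots$ as in \cref{thm:basic-turan-bound}: given $H_i$ with $\chi(H_i)=r'-i$, let $H_i'$ be an $(r'-i)$-critical subgraph of $H_i$, let $I_i$ be a \emph{maximum} independent set of $H_i'$, and $H_{i+1}=H_i'\setminus I_i$; then $\chi(H_{i+1})=r'-i-1$ and $\delta(H_i')\ge r'-i-1$, and since every $H_i'$ is an induced subgraph of $G_*$ we have $\alpha(H_i')\le C_1$ and $\omega(H_i')\le\epsilon r$. As in the original argument, for each $i$ the triangles of $H_i'$ meeting the independent set $I_i$ are counted exactly by $\sum_{v\in I_i}e(H_i'[N(v)])$, and these triangle sets are disjoint across $i$ and lie in $G_*$, so $t(G_*)\ge\sum_i\sum_{v\in I_i}e(H_i'[N(v)])$. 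Put $i_0=r'-\lceil r/100\rceil-1$. For $i\le i_0$ and $v\in I_i$ we have $d_{H_i'}(v)\ge r'-i-1\ge r/100=\epsilon r/\epsilon_2$, and $\omega(H_i'[N(v)])\le\omega(H_i')-1<\epsilon r\le\epsilon_2 d_{H_i'}(v)$, and $\alpha(H_i'[N(v)])\le\alpha(H_i')\le C_1$; moreover $\alpha(H_i')\ge 2$, since otherwise $H_i'$ would be a clique with $\omega(H_i')=|H_i'|\ge r'-i>\epsilon r$. Letting $k$ be the least even integer $\ge\alpha(H_i')$ (so $k\le C_1+1$ and $\alpha(H_i')/k\ge\tfrac34$), \cref{prop:ramsey-turan} gives
\[e(H_i'[N(v)])\ge\Bigl(\tfrac1k-\delta_2\Bigr)d_{H_i'}(v)^2-\tfrac12 d_{H_i'}(v)\ge\Bigl(\tfrac1k-\delta_2\Bigr)(r'-i-1)^2-\tfrac12 d_{H_i'}(v),\]
so summing over the $|I_i|=\alpha(H_i')$ vertices of $I_i$ yields $\sum_{v\in I_i}e(H_i'[N(v)])\ge(\tfrac34-C_1\delta_2)(r'-i-1)^2-\tfrac12\sum_{v\in I_i}d_{H_i'}(v)$.

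Summing over $0\le i\le i_0$ and using that the edge sets removed at successive steps of the chain are disjoint, so $\sum_i\sum_{v\in I_i}d_{H_i'}(v)\le e(G_*)\le r^3/1000$, I obtain
\[t(G_*)\ \ge\ \Bigl(\tfrac34-C_1\delta_2\Bigr)\sum_{i=0}^{i_0}(r'-i-1)^2\ -\ \frac{r^3}{2000}.\]
Here $\sum_{i=0}^{i_0}(r'-i-1)^2=\sum_{m=\lceil r/100\rceil}^{r'-1}m^2\ge\tfrac{(r')^3-(r/100)^3}{3}-O(r^2)\ge\tfrac{(1-\epsilon_1)^3-10^{-6}}{3}r^3-O(r^2)$, so with $\epsilon_1=\tfrac1{100}$ and $C_1\delta_2\le\tfrac1{100}$ the right-hand side is at least $0.23\,r^3-O(r^2)>\binom r3$ for $r$ large, contradicting $t(G_*)\le t(G)<\binom r3$. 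The only real idea here is the twofold Ramsey--Tur\'an gain, which comfortably absorbs the $(1-\epsilon_1)$ loss in the chromatic number coming from \cref{lem:bounded-indep}, the $O(r)$ top levels of the chain where $\delta(H_i')$ is too small for \cref{prop:ramsey-turan} to apply, and the $\tfrac12 e(G)\le r^3/2000$ edge term; the main things to get right are keeping $\alpha$ bounded along the chain (which works precisely because everything in sight is an induced subgraph of $G_*$) and fixing the constants in a consistent order, and I expect the numerical bookkeeping in the final display to be the fiddliest part.
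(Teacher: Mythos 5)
Your proof is correct and follows essentially the same route as the paper: reduce to an $r$-critical graph, use Lemma~\ref{lem:bounded-indep} to pass to a subgraph $G_*$ of chromatic number close to $r$ with bounded independence number, then run the chain construction from Theorem~\ref{thm:basic-turan-bound} inside $G_*$ with the Ramsey--Tur\'an bound (Theorem~\ref{prop:ramsey-turan}) in place of Tur\'an's theorem to exploit the roughly twofold gain. The only cosmetic differences are that you stop the chain earlier (at one level), whereas the paper continues a little further with a plain Tur\'an bound on a small tail, and you use a maximum (rather than merely maximal) independent set at each step, which cleanly justifies the factor $\alpha(H_i')/k\ge\tfrac34$; neither change affects the substance.
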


\begin{proof}
Let $C,c$ be the constants produced by applying \cref{lem:bounded-indep} with parameter $1/10$. Define $10\epsilon$ to be the minimum of the constants produced by applying \cref{prop:ramsey-turan} with parameters $1/20C, k$ for all even $k$ less than $C+1$. 

Let $G$ be an $r$-critical graph with $e(G)\leq r^3/1000$ and $\omega(G)\leq\epsilon r$. Since $G$ is $r$-critical it must have minimum degree at least $r-1$, so $|G|\leq 2e(G) / (r - 1) \leq r^2/100$. Assume $r$ is sufficiently large so that this is smaller than $cr^2\log r$. Suppose for contradiction that $t(G)<\binom r3$. Then by \cref{lem:bounded-indep}, there is a subgraph $G_0$ of $G$ with $\chi(G_0)=r'\geq9r/10$ and $\alpha(G')\leq C$.

Given a graph $G_i$ satisfying $\chi(G_i)=r'-i$, let $G_i'$ be an $(r'-i)$-critical subgraph of $G_i$. Then let $I_i$ be an independent set in $G_i'$ with maximal size. Let $G_{i+1}=G_i'\setminus I_i$. It follows that $\chi(G_{i+1})=r'-i-1$.

Write $k_i=\alpha(G_i')\leq C$. We know $\delta(G_i')\geq r'-i-1$. For $i<4r/5$, we know that $\delta(G_i')\geq r/10$. Let $k_i'=2\lceil\tfrac{k_i}2\rceil\leq k_i+1$. We know that $k_i\geq 2$ since if $k_i=1$, then $G_i'$ is a clique of size $r'-i\geq r/10$ which contradicts our assumption that $\omega(G)\leq \epsilon r$. Thus $k_i'\leq 4k_i/3$. Now by \cref{prop:ramsey-turan}, for each $v\in G_i'$ we have
\[e(G_i'[N(v)])\geq \left(\frac 1{k_i'}-\frac1{20C}\right)d_{G_i'}(v)^2-\frac{d_{G_i'}(v)}2\geq\frac{7(r'-i-1)^2}{10k_i}-\frac{d_{G_i'}(v)}2.\]
We can apply \cref{prop:ramsey-turan} to $G_i'[N(v)]$ since $\omega(G_i'[N(v)])\leq \epsilon r\leq 10\epsilon(r'-i-1)$ for $i<4r/5$.

For $i\geq 4r/5$ we have the simple Tur\'an bound that
\[
e(G_i'[N(v)])\geq \frac{d_{G_i'}(v)^2}{2k_i}-\frac{d_{G_i'}(v)}2\geq\frac{(r'-i-1)^2}{2k_i}-\frac{d_{G_i'}(v)}2.
\]

Summing this bound over all $v\in I_i$ and all $1\leq i\leq r'$ gives
\[t(G)+\frac12e(G)\geq \sum_{i=0}^{4r/5-1}\frac7{10}(r'-i-1)^2+\sum_{i=4r/5}^{9r/10}\frac12(r'-i-1)^2>1.02\binom r3.\]

Since we assumed that $e(G)\leq r^3/1000$, this implies that $t(G)\geq 1.01\binom r3$.
\end{proof}

\begin{proof}[Proof of \cref{thm:main-chromatic-weak}]
Take $\epsilon$ to be the constant in \cref{thm:small-clique}. Define $C=1000\epsilon^{-6}$. Take $r_0$ to be such that \cref{thm:linear-graphs} holds for this choice of $C$ and $\epsilon^{2/3}r/4\geq r_0$. Define $K=\max\{4r_0\epsilon^{-2/3},32000\epsilon^{-2}\}$. We can assume that $r\geq K\geq 4r_0\epsilon^{-2/3}$ since the theorem is trivially true when $r<K$.

Let $G_0$ be a subgraph of $G$ which satisfies $\chi(G_0)=r$. Given a graph $G_i$ satisfying $\chi(G_i)=r-i$, let $G_i'$ be an $(r-i)$-critical subgraph of $G_i$. For a set $U\subseteq V(G_i')$, define the \emph{modified weight of $U$} to be $\abs{U}$ if $U$ is disjoint from every clique of size $\epsilon (r-i)$ in $G_i'$ and $(1 + \epsilon^2/10) \abs{U}$ otherwise. Then let $I_i$ be an independent set in $G_i'$ with maximal \emph{modified weight}. Let $G_{i+1}=G_i'\setminus I_i$. It follows that $\chi(G_{i+1})=r-i-1$. We stop the process if any of the following occur:
\begin{enumerate}[(i)]
    \item $i>(1-\epsilon^{2/3}/4)r$; or
    \item $\abs{G_i'} < C(r - i)$; or
    \item $\omega(G_i')<\epsilon (r-i)$.
\end{enumerate}

Suppose we have not stopped the process by step $i$. First note that this implies that $|G_i'|\geq C(r-i)$. Thus $\alpha(G_i')\geq \abs{G_i'} / \chi(G_i')\geq C$.

\textbf{Case 1:} $I_i$ is disjoint from every clique of size $\epsilon (r-i)$ in $G_i'$. Since we have not stopped the process yet, $\omega(G_i')\geq \epsilon (r-i)$. Let $Q$ be a clique in $G_i'$ with maximal size. Each  $u \in Q$ is adjacent to at least $\tfrac{\epsilon^2}{20}\abs{I_i}$ elements of $I_i$, else adding $u$ to $I_i$ and removing all its neighbors would create an independent set with greater modified weight. Therefore, the number of edges $e(I_i, Q)$ between $I_i$ and $Q$ is at least $\tfrac{\epsilon^2}{20}\abs{I_i} \abs{Q}$. Note that $\abs{I_i}=\alpha(G_i') \geq C$.
Thus we conclude that
\[t_{G'_i}(I_i) \geq \sum_{v \in I_i}\binom{|N(v)\cap Q|}{2} \geq \abs{I_i} \binom{e(I_i, Q) / \abs{I_i}}{2} \geq C \binom{\epsilon^3 (r-i) / 20}{2} \geq 2 \binom{r-i-1}{2},\] where the last inequality follows by our choice of $C,\epsilon$.

\textbf{Case 2:} There exists a vertex $w\in I_i$ that is in a clique $Q$ of size at least $\epsilon(r-i)$ in $G_i'$. In this case, for any $v \in I_i \backslash \{w\}$, by Tur\'{a}n's theorem the number of triangles in $G'_i$ containing $v$ is at least $(r - i-1)^2 / (2\alpha(G_i')) - d_{G_i'}(v).$
Furthermore, $w$ is contained in at least $\binom{\abs{Q} - 1}{2} \geq \binom{\epsilon (r-i) - 1}{2} \geq \tfrac{\epsilon^2 }2  \binom{r-i}{2}$ triangles. Thus
\[t_{G_i'}(I_i) \geq \sum_{v \in I_i \backslash \{w\}} \left( \frac{(r - i-1)^2}{2\alpha(G_i')} - d_{G_i'}(v)\right) + \frac{\epsilon^2}{2} \binom{r-i}{2}.\]
By the maximality of the modified weight of $I_i$, we have $\alpha(G_i') \leq (1 + \epsilon^2/10) \abs{I_i}$. We conclude that
\[t_{G_i'}(I_i) \geq (|I_i|-1)\left(\frac{(r-i-1)^2}{2(1+\epsilon^2/10)|I_i|}\right)+\frac{\epsilon^2}2\binom{r-i}2-m_i\geq \left(1 + \frac{\epsilon^2}{4}\right)\binom{r - i}{2} - m_i,\]
where $m_i$ is the number of edges incident to $I_i$ in $G_{i}'$. The second inequality follows from our choice of $C, \epsilon$ since $|I_i|\geq \alpha(G_i')/(1+\epsilon^2/10)\geq C/(1+\epsilon^2/10)$.

Note that the above inequality holds in either case 1 or case 2.

Let $G_j'$ be the graph we stop at, with chromatic number $r - j$. Telescoping the change in the number of triangles, we have
\[t(G) - t(G_j') \geq\sum_{i = 0}^{j - 1} \left(1 + \frac{\epsilon^2}{4}\right)\binom{r - i}{2} - m',\]
where $m' = \abs{E(G)} - \abs{E(G_j')}$. As each edge is in at least $K$ triangles, we have
\[t(G) - t(G_j') \geq \frac{K}{3} m'.\]
Taking a weighted average of the two inequalities (a factor of $K/(3+K)$ times the first inequality and $3/(3+K)$ times the second), we also get
\[t(G) - t(G_j') \geq \left(1+\frac{\epsilon^2}{8}\right)\sum_{i = 0}^{j - 1}\binom{r - i}{2}.\]

Suppose we stop because of (i). Then $j\geq (1-\epsilon^{2/3}/4)r$, so
\[t(G)\geq \left(1+\frac{\epsilon^2}{8}\right) \sum_{i = 0}^{j - 1}\binom{r - i-1}{2}=\left(1+\frac{\epsilon^2}{8}\right)\left(\binom r3-\binom {r-j}3\right)\geq\binom r3.\]

If we stop because of (ii), then we know that $\abs{G_j'} < C(r-j)$. We apply \cref{thm:linear-graphs} or \cref{cor:gallai-triangle} to $G_j'$, which is $(r-j)$-critical, depending on whether $\abs{G_j'} \leq 2(r - j) - 2$ or not. (Note that we can apply \cref{thm:linear-graphs} since $(r-j)\geq \epsilon^{2/3} r/4\geq r_0$.) This implies that $t(G_j')\geq\binom{r-j}3$. Thus
\begin{align*}
t(G)
&\geq\left(1+\frac{\epsilon^2}{8}\right)\sum_{i=0}^{j-1}\binom{r - i-1}{2} +\binom{r-j}3 \geq \binom r3.
\end{align*}

If we stop because of (iii), then $\omega(G_j')<\epsilon (r-j)$. Applying \cref{thm:small-clique} to $G_j'$, we conclude that either  $t(G_j')\geq\binom{r-j}3$ or $e(G_j')\geq (r-j)^3/1000$. In the former case, the same computation as above implies that $t(G)\geq\binom r3$. In the latter case, since every edge of $G_j'$ is contained in at least $K$ triangles in $G$, we conclude that $t(G)\geq K e(G_j')/3\geq K(r-j)^3/3000$. This suffices to prove $t(G)\geq\binom r3$ since $j<(1-\epsilon^{2/3}/4)r$ and $K\geq 32000 \epsilon^{-2}$.
\end{proof}

\section{Concluding remarks}
\label{sec:conclusion}

A natural next step would be to show that \cref{thm:main-triangle-ramsey} holds for all $t$, not just those that are sufficiently large. Though it would not immediately imply new Ramsey results, we also conjecture that the following variant of \cref{thm:main-chromatic-weak} holds.

\begin{conj}\label{conj:manytriangles}
There exists $c>0$ such that every $r$-chromatic graph with at most $cr^3\log^2 r$ edges contains at least $\binom r3$ triangles.
\end{conj}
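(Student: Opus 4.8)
The plan is to follow the overall architecture of the proof of \cref{thm:main-chromatic-weak}, replacing the local hypothesis that every edge lies in at least $K$ triangles --- which is used only to convert edge counts into triangle counts, via $t(G)\ge Ke(G)/3$ --- by the global sparsity bound $e(G)\le cr^3\log^2 r$. As before, one may pass to an $r$-critical subgraph $G$, so that $\delta(G)\ge r-1$ and hence $|G|\le 2e(G)/(r-1)=O(r^2\log^2 r)$. If $|G|\le 2r-2$ we are done by \cref{cor:gallai-triangle}, and if $2r-1\le|G|\le Cr$ by \cref{thm:linear-graphs}; so the substance is the regime $Cr\le|G|\le O(r^2\log^2 r)$. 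One would then run the peeling-with-excess argument of \cref{sec:main-proof}: remove maximal (modified-weight) independent sets in the degeneracy order and track the excess of each vertex over its naive Tur\'an contribution. Two special structures must be dispatched separately --- when $G_i'$ contains an almost-spanning clique (handled, as in \cref{thm:linear-graphs}, by Gallai's theorem via \cref{prop:large-clique}), and when $\omega(G_i')$ is small. The new difficulty relative to \cref{thm:main-chromatic-weak} is that the proof there absorbs the edges $m'$ deleted during peeling only through the hypothesis $t(G)\ge Ke(G)/3$; without it, since $m'$ can be as large as $e(G)=O(r^3\log^2 r)\gg\binom r3$, one must instead show directly that the total excess is of order $e(G)$, i.e.\ that the edges deleted in fact sit in $\Omega(1)$ triangles on average --- which is again where locally sparse coloring must enter.

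The heart of the matter is upgrading the small-clique case, \cref{thm:small-clique}, to permit up to $cr^3\log^2 r$ edges instead of $r^3/1000$. The natural tool is still locally sparse coloring, applied at every scale: let $V_j$ be the set of vertices contained in between $2^{j}$ and $2^{j+1}$ triangles, so a vertex of $V_j$ has a neighborhood with at most $2^{j+1}$ edges and local sparsity parameter $f\approx\Delta(G)^2/2^{j}$. Vu's theorem (\cref{thm:loc-sparse-list-coloring}) then bounds $\chi_\ell\bigl(G[\bigcup_{j\le j_0}V_j]\bigr)$ by roughly $K\Delta(G)/\log(\Delta(G)^2/2^{j_0})$; choosing $j_0$ so that this drops below $(1-\epsilon)r$ forces $2^{j_0}$ to be nearly $\Delta(G)^2$ up to a $\poly(r)$ factor, so a positive proportion of the chromatic number must come from vertices in many triangles. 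Summing the triangle contributions across scales, and using $\delta(G)\ge r-1$ together with $e(G)\le cr^3\log^2 r$ to bound the number and degrees of vertices at each scale, should recover $\ge\binom r3$ triangles. The factor $\log^2 r$ is precisely the range of $f$ (up to $\poly(r)$) for which the $\Delta/\log f$ bound of \cref{thm:loc-sparse-list-coloring} remains useful, and it matches the $\Theta(r^3\log^2 r)$ edge count of the sparsest triangle-free $r$-chromatic graphs, which come from the Ramsey graphs witnessing $r(K_3,k)=\Theta(k^2/\log k)$.

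The step I expect to be the main obstacle is obtaining the \emph{exact} constant $\binom r3$ rather than merely $(1-o(1))\binom r3$ or $\Omega(r^3)$. The Tur\'an argument of \cref{sec:basic-turan} loses a $\tfrac12 e(G)$ term, now of order $cr^3\log^2 r\gg\binom r3$, so the refined excess bookkeeping must be shown to recover essentially all of it; this amounts to a near-equality version of Tur\'an's theorem --- a vertex's contribution falls short of the extremal value only when its link is itself close to a Tur\'an graph --- followed by a recursion into that near-extremal regime. Equally delicate is the regime $|G|=\omega(r)$, where $G$ can resemble a blow-up of a sparse triangle-free high-chromatic graph: ruling out slightly-too-few triangles there is close to a stability statement for the triangle Ramsey problem, and the constant-factor losses in the locally sparse coloring bounds must somehow be absorbed. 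A complete proof will likely require a sharpened version of \cref{thm:degen-coloring} interpolating between the degenerate and bounded-degree settings, together with a more global accounting of triangles than the edge-by-edge argument used here.
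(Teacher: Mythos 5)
The statement you are attempting to prove is \cref{conj:manytriangles}, which the paper explicitly leaves as an open conjecture in \cref{sec:conclusion}; there is no proof of it in the paper to compare against. Your proposal must therefore be judged on its own, and by its own account it is not a proof: you correctly identify the two places where the argument for \cref{thm:main-chromatic-weak} breaks down once the local ``$K$ triangles per edge'' hypothesis is dropped --- absorbing the deleted-edge deficit $m'$ in the telescoping step, and upgrading the small-clique case --- but for each you describe the ingredient needed (``a near-equality version of Tur\'an's theorem,'' ``a sharpened version of \cref{thm:degen-coloring},'' ``a more global accounting of triangles'') rather than supplying it. That is an honest and accurate diagnosis of why the conjecture is open, not a resolution of it.

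One concrete obstacle worth flagging in the multi-scale upgrade of \cref{thm:small-clique} that you sketch: Vu's bound (\cref{thm:loc-sparse-list-coloring}) gives $\chi_\ell \le K\Delta/\log f$, and to push this below $(1-\epsilon)r$ one needs $\log f \gtrsim K\Delta/r$. In \cref{prop:large-clique} this is harmless because the hypothesis $|G|\le Cr$ forces $\Delta(H)\le Cr$, so $\Delta/r=O(1)$ and $f$ need only be a large constant. In the regime your conjecture addresses, $|G|$ can be of order $r^2\log^2 r$ and there is no a priori bound $\Delta(G_i')=O(r)$ on the critical subgraphs appearing in the peeling, so $\Delta/r$ can be polynomially large; then $\log f\gtrsim K\Delta/r$ would force $f$ to be super-polynomial in $r$, incompatible with the constraint $f<\Delta^2$. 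Some preliminary step --- a degree truncation, or a bound in terms of $n$ and the triangle distribution in the spirit of \cref{thm:loc-sparse-vertex-triangle-coloring} rather than in terms of $\Delta$ alone --- is needed before the dyadic decomposition on triangle counts can be run as you describe. The threshold $cr^3\log^2 r$ is indeed the natural one, for the reason you give (the sparsest triangle-free $r$-chromatic graphs, coming from off-diagonal Ramsey constructions, have $\Theta(r^3\log^2 r)$ edges), but converting that heuristic into the sharp constant $\binom r3$ is precisely what makes this statement a conjecture rather than a theorem.
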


\cref{conj:manytriangles} would imply that the number of triangles guaranteed in an $r$-critical graph with a given number of edges has a transition from at least $\binom{r}{3}$ to $0$ when the number of edges is $\Theta(r^3\log^2 r)$. 

More generally, we are also interested in studying \cref{conj:clique-clique}, showing that $r_{K_s}(K_t)=\binom{r(K_t)}{s}$ for all $s\leq t$. The condition $s\leq t$ is necessary, since Folkman \cite{Fol70} constructed an explicit family of graphs showing $r_{K_s}(K_t)=0$ for all $s>t$. A more modern perspective is given by R\"odl and Ruci\'nski \cite{RR95} who showed that $G(n,p)$ is $K_t$-Ramsey with probability $1-o(1)$ when $p=Cn^{-2/(t+1)}$. This graph is easily seen to have few copies of $K_{t+1}$. Modifying the graph appropriately, one can construct $K_{t+1}$-free graphs that are $K_t$-Ramsey.

R\"odl and Ruci\'nski constructed $F$-free graphs that are $H$-Ramsey for many pairs $(F,H)$. For a graph $G$, define its 2-density as
\[m_2(G) = \max_{G' \subseteq G} \frac{e(G') - 1}{|G'| - 2},\]
where $G'$ ranges over all induced subgraphs of $G$ with at least 3 vertices.

\begin{prop}[{\cite[Corollary 2]{RR95}}]
If $m_2(F)>m_2(H)$, then $r_F(H)=0$.
\end{prop}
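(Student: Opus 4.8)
The goal is to exhibit an $F$-free graph that is $H$-Ramsey, and the plan is to obtain one as a small modification of a binomial random graph at the R\"odl--Ruci\'nski threshold. Set $p = C n^{-1/m_2(H)}$ for a large constant $C = C(F,H)$ and let $G = G(n,p)$. By the R\"odl--Ruci\'nski random Ramsey theorem (the $1$-statement), a.a.s.\ $G$ is $H$-Ramsey; this applies whenever $H$ avoids the few degenerate exceptional graphs (certain forests) for which the arrowing threshold differs, and those exceptional $H$ can be treated by direct constructions. The remaining task is to delete a small edge set that destroys every copy of $F$ while provably preserving the Ramsey property.

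First I would pass to a $2$-balanced core. Choose a subgraph $F_0 \subseteq F$ with $|F_0| \geq 3$ and $m_2(F_0) = m_2(F)$; then $m_2(F_0) > m_2(H)$, and since every copy of $F$ contains a copy of $F_0$, any $F_0$-free graph is automatically $F$-free. The expected number of copies of $F_0$ in $G$ is $\Theta\bigl(n^{|F_0|} p^{e(F_0)}\bigr)$, and the hypothesis, rewritten as $m_2(H) < m_2(F_0) = \frac{e(F_0)-1}{|F_0|-2}$, is precisely what forces this to be $o(n^2 p) = o(e(G))$; the same inequality bounds the expected number of copies of $F_0$ through any fixed vertex by $o(np)$. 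Hence a.a.s.\ $G$ has at most $o(e(G))$ copies of $F_0$, and one may greedily select one edge from each so that the removed set $S$ satisfies $|S| = o(e(G))$ and meets each vertex in $o(np)$ edges (and, with slightly more care, each pair of vertices in $o(np^2)$ edges). Deleting $S$ yields an $F_0$-free, hence $F$-free, graph $G'$.

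The crux is that $G'$ is still $H$-Ramsey, which does \emph{not} follow from $G$ being $H$-Ramsey, since an adversarial $2$-coloring of $G$ could in principle route all of its monochromatic copies of $H$ through $S$. The resolution is a robust form of the random Ramsey theorem: a.a.s.\ $G(n,p)$ at this density remains $H$-Ramsey after the deletion of any ``spread-out'' edge set of the kind produced above (indeed, one expects the stronger statement that deleting any sufficiently small constant fraction of the edges preserves arrowing). Such robustness is implicit in the internal estimates of R\"odl--Ruci\'nski's proof of the $1$-statement, and a transparent modern route is the hypergraph container method (Balogh--Morris--Samotij, Saxton--Thomason), which supplies a sparse counting and embedding lemma that is stable under removal of a spread-out sparse edge set; feeding this into the supersaturation-plus-union-bound proof of $G(n,p) \to (H)$ shows that the Ramsey property survives the passage from $G$ to $G'$. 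This robustness step is the main obstacle, everything else being routine first- and second-moment estimates, and the condition $m_2(F) > m_2(H)$ is exactly the window in which the copies of $F_0$ are sparse enough to be excised without disturbing the Ramsey structure of $G(n,p)$.
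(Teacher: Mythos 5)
This proposition is cited by the paper directly from R\"odl--Ruci\'nski \cite[Corollary~2]{RR95}; the paper does not reprove it, so there is no in-house proof to compare against. Your sketch follows what is, to my knowledge, exactly the classical deletion-method strategy behind that corollary: take $G(n,p)$ at the random Ramsey threshold $p = Cn^{-1/m_2(H)}$, invoke the $1$-statement to get an $H$-Ramsey graph, verify by a first-moment computation that a $2$-balanced core $F_0\subseteq F$ with $m_2(F_0)=m_2(F)$ appears only $o(n^2 p)$ times, delete one edge per copy, and argue robustness of arrowing. The reduction to $F_0$ and the first-moment computation are correct, and you correctly identify where the real work lies.

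That place is a genuine gap in the write-up as it stands. Robustness is not a consequence of $G$ being $H$-Ramsey, and ``implicit in the internal estimates of R\"odl--Ruci\'nski'' is not a proof. The honest route (and the one that underlies \cite{RR95}) is a supersaturated $1$-statement: a.a.s.\ every $2$-coloring of $G(n,p)$ produces at least $\delta\, n^{v(H)} p^{e(H)}$ monochromatic copies of $H$. One then extends a coloring of $G\setminus S$ to $G$ arbitrarily, and must bound the number of $H$-copies meeting $S$. Here a subtlety that your sketch glosses over: at this density the number of $H$-copies through a fixed edge concentrates around $\Theta\bigl(n^{v(H)-2}p^{e(H)-1}\bigr)$ only up to polylogarithmic fluctuations, and the maximum over edges is genuinely larger; so ``each edge of $S$ kills $O(n^{v(H)-2}p^{e(H)-1})$ copies'' is not free. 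One has to either choose which edge of each $F_0$-copy to delete so as to avoid the rare exceptional edges, or a.a.s.\ bound the total $H$-count through any $o(n^2p)$-size spread-out set directly. Relatedly, the parenthetical claim that ``deleting any sufficiently small constant fraction of the edges preserves arrowing'' should be dropped: it is false without a spread-out hypothesis (concentrating the deleted edges at a bounded vertex set can destroy arrowing even in supersaturated random graphs). The appeal to hypergraph containers as ``a transparent modern route'' is plausible but not self-contained; either cite the supersaturation lemma of \cite{RR95} explicitly or give the container-based counting lemma and explain why it survives passing from $G$ to $G\setminus S$.
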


It is an interesting question to determine for which pairs $(F,H)$ is it true that $r_F(H)=0$. We remark that a very recent result of Reiher and R\"{o}dl \cite{RR23} is equivalent to the statement that $r_F(H) = 0$ for all pairs of graphs $(F,H)$ where the girth of $F$ is smaller than the girth of $H$.

One class of $(F,H)$ for which determining $r_F(H)$ is particularly easy is that when $F$ is a forest and $H$ is a clique. This generalizes the classical determination of the size Ramsey number of cliques (\cref{thm:chvatal-size-ramsey-clique}).

\begin{prop}
For every forest $F$ and positive integer $t$, the complete graph on $r(K_t)$ vertices has the minimum number of copies of $F$ over all graphs which are Ramsey for $K_t$. 
\end{prop}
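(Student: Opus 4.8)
The plan is to reduce the statement to Chvátal's theorem (\cref{thm:chvatal-size-ramsey-clique}) by showing that, among all graphs which are $K_t$-Ramsey, the complete graph $K_r$ with $r = r(K_t)$ simultaneously minimizes the number of copies of any fixed forest $F$. The key structural fact is that for a forest, the number of copies in a graph is controlled from below by the number of edges, in a way that is extremal precisely on cliques. More concretely, write $k = |F|$ and $m = e(F)$ (so $m \le k-1$, with equality iff $F$ is a tree; in general $m = k - (\text{number of components})$). I would first handle the case where $F$ has an isolated vertex or is otherwise degenerate (if $F$ has an isolated vertex, the number of copies of $F$ in $G$ is $|G|$ times the number of copies of $F$ minus that vertex, and one inducts on $|F|$; if $F$ has a component that is a single edge, a similar factoring applies), so that the main case is $F$ a forest with no isolated vertices.

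The central claim is: if $G$ is $K_t$-Ramsey, then the number of (labeled, say) copies of $F$ in $G$ is at least the number of copies of $F$ in $K_r$. By \cref{prop:ramsey-critical}(a), $G$ is $r$-chromatic, hence (passing to an $r$-critical subgraph, which only decreases the $F$-count) we may assume $\delta(G) \ge r-1$ and $|G| \ge r$. I would prove that any graph $G$ with $\delta(G) \ge r-1$ contains at least as many copies of $F$ as $K_r$ does. The natural approach is a greedy embedding: order the vertices of $F$ as $u_1, \dots, u_k$ so that each $u_i$ with $i \ge 2$ has exactly one neighbor among $u_1, \dots, u_{i-1}$ (possible since each component of $F$ is a tree — take a BFS/DFS forest order), and embed $F$ into $G$ one vertex at a time. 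When embedding $u_i$, if it is the root of its component we have at least $|G| - (i-1) \ge r - (i-1)$ choices; if it has a previously-embedded neighbor $u_j$, we have at least $d_G(\varphi(u_j)) - (i-1) \ge (r-1) - (i-1) = r - i$ choices. Multiplying, the number of embeddings of $F$ into $G$ is at least $\prod_{i=1}^{k}(r - c_i)$ for appropriate offsets $c_i$, and this is exactly the count one gets by running the same greedy procedure on $K_r$ — where it is tight. One must check the bookkeeping so that the product lower bound for arbitrary $G$ matches the exact count in $K_r$; the clean way is to note that in $K_r$ the greedy count equals the true number of copies (every injection that respects adjacency works, and the greedy product counts exactly these), so the inequality "copies in $G$ $\ge$ greedy product $=$ copies in $K_r$" follows provided $r \ge k$, i.e., $r(K_t) \ge |F|$.

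This leaves the small-$r$ caveat: if $r(K_t) < |F|$ then $K_r$ contains no copy of $F$ at all, so the minimum is trivially $0$ and the statement holds vacuously (the complete graph on $r(K_t)$ vertices attains the minimum, which is $0$). So the only real content is $r(K_t) \ge |F|$, handled above. I expect the main obstacle to be purely organizational rather than deep: getting the greedy-embedding offsets $c_i$ to line up so that the lower bound for general $G$ is exactly the $F$-count of $K_r$, and cleanly dispatching forests with isolated vertices or edge-components via the factoring/induction. Once the claim "$\delta(G) \ge r-1 \implies$ at least as many copies of $F$ as in $K_r$" is in hand, the proposition follows immediately: $K_r$ is $K_t$-Ramsey by definition of $r(K_t)$, and any $K_t$-Ramsey graph has $\ge$ as many $F$-copies, so $K_r$ is a minimizer. (As a sanity check, $F = K_2$ recovers $\binom{r}{2}$, matching \cref{thm:chvatal-size-ramsey-clique}.)
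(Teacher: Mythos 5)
Your approach is the same as the paper's: reduce via \cref{prop:ramsey-critical}(a) to a subgraph of minimum degree at least $r(K_t)-1$, then greedily embed $F$ in a BFS/DFS order and compare the product lower bound to the exact count in $K_{r(K_t)}$ (the paper then divides by $|\mathrm{Aut}(F)|$ to pass from labeled to unlabeled copies). The one piece of ``bookkeeping'' you flagged is real and is exactly your off-by-one: when placing a non-root vertex $u_i$ with previously-embedded neighbor $u_j$, the image $\varphi(u_j)$ is not in its own neighborhood, so you are excluding at most $i-2$ (not $i-1$) used vertices from $N(\varphi(u_j))$, giving at least $(r-1)-(i-2)=r-i+1$ choices; this matches the $r-i+1$ choices for a root vertex and makes the product $\prod_{i=1}^{|F|}(r-i+1)$ equal to the labeled count in $K_r$, which your $r-i$ bound does not. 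The side case-split for isolated vertices and edge-components is unnecessary, since the greedy embedding (with ``at most one'' backward neighbor) already handles all forests.
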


By \cref{prop:ramsey-critical}(a), any graph that is $K_t$-Ramsey is $r(K_t)$-chromatic and thus contains a subgraph with minimum degree at least $r(K_t)-1$. Thus the above proposition follows from the following easy lemma.

\begin{lemma}
Let $F$ be a forest. If a graph $G$ has minimum degree at least $d$, then the number of (unlabeled) copies of $F$ in $G$ is at least the number of (unlabeled) copies of $F$ in $K_{d + 1}$.
\end{lemma}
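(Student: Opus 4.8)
The plan is to count \emph{embeddings} of $F$, meaning injective graph homomorphisms, into each of $G$ and $K_{d+1}$. The key observation is that for any graph $G'$ the number of unlabeled copies of $F$ in $G'$ equals the number of embeddings of $F$ into $G'$ divided by $\abs{\operatorname{Aut}(F)}$: an embedding $\phi$ is injective on vertices, hence on edges, so its image is a subgraph of $G'$ isomorphic to $F$, and conversely each copy $H\cong F$ of $F$ in $G'$ is the image of exactly $\abs{\operatorname{Aut}(F)}$ embeddings (the isomorphisms $F\to H$). In $K_{d+1}$ every injection from $V(F)$ to the vertex set is an embedding, so with $k=|V(F)|$ there are precisely $(d+1)_k := (d+1)d(d-1)\cdots(d-k+2)$ of them. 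Therefore it suffices to show that $G$ admits at least $(d+1)_k$ embeddings of $F$.

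I would prove this by induction on $k$. If $k\geq d+2$ then $(d+1)_k=0$ and there is nothing to prove, so assume $k\leq d+1$; note that $\delta(G)\geq d$ forces $|V(G)|\geq d+1$. When $k=1$ the graph $F$ is a single vertex, and there are $|V(G)|\geq d+1=(d+1)_1$ embeddings. For the inductive step, pick a vertex $v$ of $F$ that is a leaf if $F$ has an edge, and an arbitrary (isolated) vertex otherwise. Put $F'=F-v$, a forest on $k-1$ vertices, and if $v$ is a leaf let $u$ be its unique neighbor in $F$. By induction there are at least $(d+1)_{k-1}$ embeddings $\phi$ of $F'$ into $G$; fix one. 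If $v$ is isolated, we may send $v$ to any of the at least $(d+1)-(k-1)=d-k+2$ vertices outside the image $\phi(V(F'))$, obtaining a valid embedding of $F$. If $v$ is a leaf, then $\phi(u)$ has at least $d$ neighbors, of which at most $k-2$ lie in $\phi(V(F'))$ (since $\phi(u)$ is not one of its own neighbors and $|\phi(V(F'))|=k-1$), so there are at least $d-k+2$ vertices outside the image and adjacent to $\phi(u)$ to which we may send $v$. In either case every embedding of $F'$ extends to an embedding of $F$ in at least $d-k+2$ distinct ways, so $G$ has at least $(d+1)_{k-1}(d-k+2)=(d+1)_k$ embeddings of $F$, completing the induction. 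The lemma follows, and with it the displayed proposition, via \cref{prop:ramsey-critical}(a) and the fact that an $r(K_t)$-chromatic graph contains a subgraph of minimum degree at least $r(K_t)-1$.

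There is no serious obstacle here. The one point that requires care is the bookkeeping in the first paragraph, where the factor $\abs{\operatorname{Aut}(F)}$ must be seen to cancel between $G$ and $K_{d+1}$; and the structurally important observation is that one must delete a \emph{leaf} rather than an arbitrary vertex, since extending an embedding across a leaf requires realizing only one new adjacency, which is exactly what a minimum-degree hypothesis guarantees. This is precisely why the statement is restricted to forests: for a vertex of higher degree the extension step would need several prescribed adjacencies simultaneously, which $\delta(G)\geq d$ does not provide.
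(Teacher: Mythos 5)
Your proof is correct and is essentially the same argument as the paper's: both order the vertices of $F$ so that each new vertex has at most one earlier neighbor (i.e., peel off leaves), count labeled copies/embeddings greedily with at least $d-i+2$ choices at step $i$, and divide by $\abs{\operatorname{Aut}(F)}$ at the end. The paper phrases the count as a forward greedy construction while you phrase it as induction on $|F|$, but these are the same bookkeeping.
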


\begin{proof}
Label the vertices of $F$ as $\{1,2,\ldots,|F|\}$ so that vertex $i$ has at most one neighbor among $\{1,2,\ldots,i - 1\}$ for each $i$. We construct a labeled copy of $F$ in $G$ one vertex at a time. Since $|G|\geq d+1$, there are at least $d-i+2$ ways to pick vertex $i$. Thus the number of labeled copies of $F$ in $G$ is at least $\prod_{i=1}^{|F|}(d-i+2)$ which is the number of labeled copies of $F$ in $K_{d+1}$. Dividing by $|\mathrm{Aut}(F)|$ gives that the number of unlabeled copies of $F$ in $G$ is at least the number of unlabeled copies of $F$ in $K_{d+1}$.
\end{proof}

Going back to \cref{conj:clique-clique}, the upper bound $r_{K_s}(K_t)\leq \binom{r(K_t)}s\leq 4^{st}$ follows immediately from the definition and can be improved to $r_{K_s}(K_t) \leq 3.9995^{st}$ via recent work on diagonal Ramsey numbers \cite{CGMS23}. While we are currently unable to prove a matching lower bound, we give the following result which complements the classical bound $r(K_t)>2^{t/2}$.

Let $m(b)$ be the minimum number of edges in a $b$-uniform hypergraph without property B, i.e., the minimum number of edges in a $b$-uniform hypergraph without a proper two-coloring of its vertices.

\begin{prop}
\label{prop:clique-clique-lower}
For all $s\leq t$, letting $b=\binom{t}{2}$, we have 
\[r_{K_s}(K_t)\geq m(b)^{s/t}\geq 2^{s(t-1)/2-1}.\]
\end{prop}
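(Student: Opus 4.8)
The plan is to reduce the lower bound $r_{K_s}(K_t) \geq m(b)^{s/t}$, where $b = \binom{t}{2}$, to a counting argument connecting $K_s$-copies in a Ramsey graph to a hypergraph coloring obstruction, and then invoke the classical lower bound $m(b) \geq 2^{b-1}$ on the minimum number of edges in a $b$-uniform hypergraph without property B. First I would take any graph $G$ that is $K_t$-Ramsey and build a $b$-uniform hypergraph $\mathcal{H}$ on a carefully chosen ground set: the natural choice is to let the vertex set of $\mathcal{H}$ be the set of $t$-cliques of $G$ (or rather the set of edges of $G$, with each $K_t$-copy of $G$ giving a hyperedge consisting of its $\binom{t}{2}$ edges). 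Then a red/blue coloring of $E(G)$ with no monochromatic $K_t$ is exactly a two-coloring of the vertex set of $\mathcal{H}$ (the edges of $G$) in which no hyperedge is monochromatic, i.e., a proper two-coloring witnessing property B. Since $G$ is $K_t$-Ramsey, no such coloring exists, so $\mathcal{H}$ is a $b$-uniform hypergraph without property B, hence has at least $m(b)$ edges, i.e., $G$ has at least $m(b)$ copies of $K_t$.

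Next I would convert a lower bound on the number of $K_t$'s into a lower bound on the number of $K_s$'s. This is where the exponent $s/t$ enters: if $G$ has $N_t$ copies of $K_t$ and $N_s$ copies of $K_s$, then by a standard supersaturation/convexity (Kruskal–Katona-type) comparison one expects $N_s \geq N_t^{s/t}$, since every $K_t$ contains $\binom{t}{s}$ copies of $K_s$ and the extremal configuration for few $K_s$'s given many $K_t$'s is a clique. Concretely, I would argue: among all graphs with exactly $N_s$ copies of $K_s$, the maximum number of copies of $K_t$ is achieved by a clique $K_m$ with $\binom{m}{s} \approx N_s$, which then has $\binom{m}{t} \approx N_s^{t/s}$ copies of $K_t$ (comparing the leading behavior $m^s/s!$ versus $m^t/t!$); rearranging gives $N_s \geq N_t^{s/t}$ up to the precise constants, which one checks works out cleanly enough to yield $N_s \geq m(b)^{s/t}$. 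The cleanest route is probably to use the clique-density / Kruskal–Katona inequality in the form: if a graph has at least $\binom{m}{t}$ copies of $K_t$ then it has at least $\binom{m}{s}$ copies of $K_s$, applied with $m = r(K_t)$-ish parameters extracted from $N_t \geq m(b)$.

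Finally, the chain $m(b)^{s/t} \geq 2^{s(t-1)/2 - 1}$ follows from the classical bound $m(b) \geq 2^{b-1}$ (the easy direction of property B, via a union bound: a random two-coloring of a $b$-uniform hypergraph with fewer than $2^{b-1}$ edges has positive probability of being proper), since $m(b)^{s/t} \geq (2^{b-1})^{s/t} = 2^{(s/t)(\binom{t}{2} - 1)} \geq 2^{(s/t)(t(t-1)/2 - 1)}$ and $\tfrac{s}{t}\bigl(\tfrac{t(t-1)}{2} - 1\bigr) = \tfrac{s(t-1)}{2} - \tfrac{s}{t} \geq \tfrac{s(t-1)}{2} - 1$. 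The main obstacle I anticipate is making the $N_s \geq N_t^{s/t}$ step fully rigorous with the right constants rather than just asymptotically — the convexity comparison between clique counts of different sizes needs either Kruskal–Katona applied to the clique complex or a direct inductive argument, and one must be careful that the floor/ceiling issues in passing from "$N_t$ copies of $K_t$" to "contains $K_m$-worth of $K_t$'s" don't cost more than the slack available (which is why the statement only claims the slightly lossy bound $2^{s(t-1)/2-1}$ rather than something tight). Everything else — the hypergraph construction and the property-B union bound — is routine.
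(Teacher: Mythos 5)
Your proposal is correct and follows essentially the same route as the paper: encode $K_t$-Ramsey-ness as failure of property B for the $b$-uniform hypergraph on ground set $E(G)$ whose hyperedges are the edge sets of the $K_t$-copies, deduce $G$ has at least $m(b)$ copies of $K_t$, and then pass from $K_t$-count to $K_s$-count via Kruskal--Katona, finishing with $m(b)\ge 2^{b-1}$. The only spot where you hedge, the inequality $N_s\ge N_t^{s/t}$, is indeed what Kruskal--Katona gives (via $\binom{x}{s}^{1/s}\ge\binom{x}{t}^{1/t}$ for $s\le t$), which is exactly the step the paper takes.
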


\begin{proof}
For the case when $s=t$, let $G$ be a $K_t$-Ramsey graph. Define $H$ to be the $b$-uniform hypergraph whose vertex set is $E(G)$ and whose edges correspond to copies of $K_t$ in $G$. Then this hypergraph does not have property B, since any proper 2-coloring of the vertices of $H$ corresponds to a 2-edge-coloring of $G$ without a monochromatic $K_t$. Therefore we conclude that the number of copies of $K_t$ in $G$, which is $|E(H)|$, is at least $m(b)$.

Now for $s\leq t$, we know that every graph that is $K_t$-Ramsey contains at least $m(b)$ copies of $K_t$. By the Kruskal--Katona theorem, such a graph also contains at least $m(b)^{s/t}$ copies of $K_s$.
\end{proof}

We do not believe that the techniques in this paper will be able to prove \cref{conj:clique-clique}, as there is no apparent analogue of \cref{thm:main-chromatic-weak} or even \cref{thm:basic-turan-bound} for $K_s$ with $s\geq 4$. To see this, consider any triangle-free $r$-chromatic graph $G$ with $O(r^3\log^{2}r)$ edges. Modifying this graph by placing a disjoint copy of $K_{s+K}$ on each edge for an appropriately chosen $K$ gives counterexamples to many natural analogues of \cref{thm:main-chromatic-weak,thm:basic-turan-bound}.


\end{document}